\let\oldtocsection=\tocsection
\let\oldtocsubsection=\tocsubsection
\renewcommand{\tocsection}[2]{\hspace{0em}\oldtocsection{#1}{#2}}
\renewcommand{\tocsubsection}[2]{\hspace{2em}\oldtocsubsection{#1}{#2}}
\let\oldAA\AA
\renewcommand{\AA}{\text{\normalfont\oldAA}}
\def\rn{\bbr^n}
\def\rnk{\bbr^{n-k}}
\def\cgnk {{\rm Gr} (n,k)}
\def\sn{S^{n-1}}
\def\snk{S^{n-k-1}}
\def\lang{\langle}
\def\rang{\rangle}
\def\part{\partial}
\def\intl{\int\limits}
\def\t{\tau}
\def\e{\varepsilon}
\def\Gam{\Gamma}
\def\a{\alpha}
\def\vp{\varphi}
\def\gam{\gamma}
\def\supp{{\hbox{\rm supp}}}
\def\const{{\hbox{\rm const}}}
\def\W{\mathcal{W}}
\def\bbr{{\Bbb R}}
\newtheorem{theorem}{Theorem}[section]
\newtheorem{lemma}[theorem]{Lemma}
\theoremstyle{definition}
\theoremstyle{remark}
\newtheorem{remark}[theorem]{Remark}
\numberwithin{equation}{section}
\theoremstyle{corollary}
\newtheorem{corollary}[theorem]{Corollary}
\newtheorem{proposition}[theorem]{Proposition}
\newtheorem{question}[theorem]{Question}
\numberwithin{equation}{section}
\newcommand{\be}{\begin{equation}}
\newcommand{\ee}{\end{equation}}
\newcommand{\bea}{\begin{eqnarray}}
\newcommand{\eea}{\end{eqnarray}}
\newcommand{\Bea}{\begin{eqnarray*}}
\newcommand{\Eea}{\end{eqnarray*}}
\def\sideremark#1{\ifvmode\leavevmode\fi\vadjust{\vbox to0pt{\vss
 \hbox to 0pt{\hskip\hsize\hskip1em
\vbox{\hsize2cm\tiny\raggedright\pretolerance10000
 \noindent #1\hfill}\hss}\vbox to8pt{\vfil}\vss}}}%
\begin{document}


\title[The Inverse Problem]
{The Inverse Problem for the  Euler-Poisson-Darboux Equation and   Shifted $k$-Plane  Transforms }

\author{ B. Rubin}

\address{Department of Mathematics, Louisiana State University, Baton Rouge,
Louisiana 70803, USA}
\email{borisr@lsu.edu}

\subjclass[2020]{Primary 44A12; Secondary  	35R30, 42B15}



\keywords{Euler-Poisson-Darboux equation, Radon transforms, Injectivity, $L^p$ spaces.}

\begin{abstract}
The  inverse problem for the  Euler-Poisson-Darboux equation deals with reconstruction of the Cauchy data
 for this equation from incomplete information about its solution. In the present article, this problem is studied
 in connection with the injectivity of the shifted $k$-plane transform, which assigns to functions in  $L^p(\mathbb {R}^n)$  their mean values   over all
 k-planes  at a fixed  distance  from the given $k$-planes. Several generalizations, including the Radon transform  over strips of fixed width in $\mathbb {R}^2$
  and a similar transform over tubes of fixed diameter in $\mathbb {R}^3$,  are considered.
  \end{abstract}

\maketitle

\section{Introduction}

The present article grew up from the following problems in PDE and integral geometry, dealing with  generalizations of  the spherical means  and Radon transforms in $\rn$.

 \vskip 0.2 truecm

 \noindent {\bf Problem 1.}   We recall that a singular Cauchy problem for the
Euler-Poisson-Darboux equation (abbreviated as the EPD equation) is to find a function $u(x,t)$ on $\rn \times (0,\infty)$, $n\ge 1$, satisfying
\begin{equation}
\Delta_x u-u_{tt}-\frac{n+2\alpha-1}{t}\,u_{t}%
=0,\label{EPD.1}%
\end{equation}%
\begin{equation}
\lim\limits_{t \to 0} u(x,t)=f(x),\qquad \lim\limits_{t \to 0} u_{t}(x,t)=0,\label{EPD.2}%
\end{equation}
where $\Delta_x$ is the Laplace operator in the $x$-variable.  The case  $\alpha= (1-n)/2\,$ in (\ref{EPD.1}) gives the usual wave equation.
 If  $\alpha\ge (1-n)/2$,  then (\ref {EPD.1})-(\ref {EPD.2})
has a unique
solution \be\label{ijt} u(x,t)=(M_t^{\a}f)(x),\ee where  $M_t^{\a}f$ is defined
as  analytic continuation in the $\a$-variable of the integral
\be\label {hen}
(M_t^{\a}f)(x)  \!=\!\frac{\Gamma\left(  \alpha\!+\!n/2\right)  }{\pi^{n/2}\Gamma\left(
\alpha\right)  }\intl_{|y|<1}\!\!(1\!-\!|y|^{2})^{\alpha-1}%
f(x\!-\!ty)\,dy.\ee
 If  $\alpha<(1-n)/2$,   the unique
solution of (\ref {EPD.1})-(\ref {EPD.2})
is not possible;   see \cite{B2}  for details.
If  $\alpha=0$, then   $M_t^{0}f\equiv \lim\limits_{\a \to 0} M_t^{\a}f$  is the spherical mean
\be\label {mf}
(M_t^{0}f)(x)= \intl_{S^{n-1}} f (x-t\theta)\, d_*\theta, \ee
where  $S^{n-1}$ is the unit
sphere in $\rn$ and $d_*\theta$ stands for the normalized surface area measure
on $S^{n-1}$.
 In the case  $\alpha=1$, we have
 \be\label {hena}
(M_t^{1}f)(x)  \!=\!\frac{\Gamma\left(1+\!n/2\right)  }{\pi^{n/2}}\intl_{|y|<1}\!\!
f(x\!-\!ty)\,dy,\ee
which is the mean value of $f$ over the Euclidean ball with center $x$ and radius $t$.

\begin{question}  Is it possible to reconstruct the initial function $f$ from  the solution $u(x,t)$ if the latter is known only for  some fixed value $t\!=\!\rho>0$ ?
\end{question}

In the cases $\alpha=0$ and $\alpha=1$, when $u(x,t)$ is expressed by the mean value operators (\ref{mf}) and (\ref{hena}), this question is the well known Pompeiu problem in integral geometry for spheres and balls; see, e.g., \cite{RS, T, Vo, Z80}.

 \vskip 0.2 truecm

\noindent {\bf Problem 2.} Let $X$ be an $n$-dimensional constant curvature space,  and   $\Xi$ be the set of all $k$-dimensional totally geodesic submanifolds of $X$, $1\le k \le n-1$,  \cite{H11}.
Consider  the Radon type transform
\be\label{Radon}
(R_\rho f)(\xi)=\!\!\intl_{d(x,\xi)=\rho}\!\! f(x) dm(x), \qquad x\in X, \quad \xi \in \Xi, \quad \rho>0,\ee
where  $d (\cdot, \cdot)$ stands for the  geodesic distance on $X$ and  $dm(x)$ is the relevant canonical measure. Following  F.  Rouvi\`{e}re \cite [p. 19]{Rou}, we call $R_\rho f$ the   {\it shifted Radon transform}.
 The limiting case $\rho=0$ gives the usual totally geodesic Radon transform  \cite{H11}.

\begin{question}  Suppose that $\rho>0$ is fixed. Is it possible to reconstruct $f$ if $ (R_\rho f)(\xi)$ is known for all (or almost all) $\xi \in \Xi$?
\end{question}

This problem was studied in  \cite{Ru22} for the case when $X$ is the unit sphere in  $\bbr^{n+1}$. In the present paper, we assume that $X=\rn$ and $\Xi$ is the Grassmann manifold of  $k$-dimensional affine planes in $\rn$. We  also consider the accompanying operator
 \be\label{Rafe}
(\tilde R_\rho f)(\xi)=\!\!\intl_{d(x,\xi)< \rho}\!\! f(x)\, dx \ee
and ask the same question. Note that if we formally set $k=0$ in (\ref{Radon}) and (\ref{Rafe}), we arrive at the mean value operators  (\ref{mf}) and (\ref{hena}).
 Thus Problems 1 and 2 have  common nature.

Problem 2  can be traced back  to the celebrated  1917  paper by J. Radon \cite {Rad}, who investigated reconstruction of a function on the $2$-plane from the integrals of this function over straight lines.

\begin{question} We wonder, whether   the straight lines in  Radon's  problem  can be  replaced by  the strips of fixed width? Can we treat a similar problem  for pipes or solid tubes of fixed diameter in $\bbr^3$?
 \end{question}

These questions, which may look naive at first glance, will be studied below from the point of view of injectivity of our operators on $L^p$ functions
in the  framework of the more  general analytic family of operators  $R^{\a}_\rho$ (see Section \ref {polk}), including  (\ref{Radon}) and (\ref{Rafe})  as particular cases.

In Section 2 we formulate the main results (see Theorems \ref {laaTi}, \ref{lzzt}, and \ref{lzzt2}). Section 3 contains necessary preliminaries. In Section 4 we prove Theorem \ref {laaTi}. Theorems \ref{lzzt} and \ref{lzzt2} are proved in Section 5.

\section{Main Results.}
In the following, we use the standard notation $J_\nu (r)$ for the Bessel function of the first kind \cite{Er}. 

\subsection {The EPD equation} The operator $M_t^{\a}$ is a convolution with the compactly supported tempered distribution
 \be\label {mmb}
 m_t^{\a} (y)= \frac{\Gamma\left(  \alpha\!+\!n/2\right)  }{\pi^{n/2}\Gamma\left(
\alpha\right)  } \, t^{-n} (1\!-\!|y/t|^{2})_+^{\alpha-1},\ee
 defined as analytic continuation of the corresponding expression with $Re \,\a >0$.
If $\a \ge 0$, then $M_t^{\a}$
 is obviously bounded on $L^p (\rn)$ for all $1\le p \le \infty$. However,
the oscillatory behaviour of its Fourier multiplier $ \hat m_t^{\a} (\xi)$ enables one to treat the case  $\a <0$ as well. Different special cases were studied
by    Miyachi  \cite{Mi},  Peral  \cite{Per},   Stein  \cite{St76},  Strichartz \cite{Str70}, to mention a few.
 In particular,  if $1<p<\infty$, then $M_t^{\a}$ extends as a linear bounded operator from $L^p (\rn)$ to itself if and only if
\be\label {poi}(n-1)|1/p -1/2| \le \a + (n-1)/2.\ee
More general statements, including action  from $L^p (\rn)$ to $L^q (\rn)$ with different $p$ and $q$,  and further references
can be found in  \cite {Ru87, Ru89}.

Our first result is the following theorem, in which $\alpha\ge (1-n)/2$ and $\rho$ is a fixed positive number.

\begin{theorem}\label {laaTi} ${}$\hfill

\vskip 0.2 truecm

\noindent {\rm (i)} The  initial function $f\in L^p (\rn)$ in  (\ref {EPD.1})-(\ref {EPD.2}) can be
uniquely reconstructed from  $u(x,t)\vert_{t=\rho}$, provided that $1\le p\le  2n/(n-1)$ if $n>1$ or $p<\infty$ if $n=1$.

\vskip 0.2 truecm

\noindent {\rm (ii)}  Let  $p> 2n/(n-1)$ if $n>1$, or $p=\infty$ if $n=1$. If  $J_{n/2 +\a -1} (\rho)=0$,   then
 $f\in L^p (\rn)$ cannot be  uniquely reconstructed if the solution  $u(x,t)$  is known only for $t=\rho$.

\vskip 0.2 truecm

\noindent {\rm (iii)}
For all $1\le p \le \infty$ and $n \ge 1$, the  function $f\in L^p (\rn)$
 is uniquely defined if the solution  $u(x,t)$ is known for any  two  fixed values $t=\rho_1$ and
 $t=\rho_2$, provided that $\rho_1/\rho_2$ is not a quotient of zeros of the Bessel function  $J_{n/2 +\a -1}$.

\end{theorem}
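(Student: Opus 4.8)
The plan is to convert the entire statement into a question about the \emph{Fourier multiplier} of $M_\rho^\a$ and about the support of the Fourier transform of a putative null element. Since $m_t^\a$ is a compactly supported radial distribution, $M_\rho^\a$ acts on the Fourier side as multiplication by the smooth radial symbol
\be\label{symbol}
\widehat{m_\rho^\a}(\xi)=\Gamma(\nu+1)\Big(\frac{2}{\rho|\xi|}\Big)^{\nu}J_\nu(\rho|\xi|),
\qquad \nu=\frac{n}{2}+\a-1,
\ee
which for $\a\ge (1-n)/2$ is bounded with polynomially bounded derivatives, hence an $\mathcal S'$-multiplier. First I would record that \eqref{symbol} is real-analytic and vanishes exactly on the union of spheres $Z_\rho=\{|\xi|=z_{\nu,j}/\rho:\ j\ge 1\}$, where $z_{\nu,1}<z_{\nu,2}<\cdots$ are the positive zeros of $J_\nu$. ``Unique reconstruction'' then means injectivity of $M_\rho^\a$ on $L^p(\mathbb R^n)$: off $Z_\rho$ one simply divides, $\hat f=\hat u/\widehat{m_\rho^\a}$ with $u=u(\cdot,\rho)$, so the only issue is the null space. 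If $M_\rho^\a g=0$ with $g\in L^p$, then $\widehat{m_\rho^\a}\,\hat g=0$ in $\mathcal S'$, and dividing locally by \eqref{symbol} on the open set where it is nonzero gives the basic structural fact
\be\label{supp}
\operatorname{supp}\hat g\subseteq Z_\rho.
\ee

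Part (iii) falls out of \eqref{supp} alone, with no restriction on $p$. If $g$ lies in both null spaces, then $\operatorname{supp}\hat g\subseteq Z_{\rho_1}\cap Z_{\rho_2}$; a common radius would force $z_{\nu,i}/\rho_1=z_{\nu,j}/\rho_2$, i.e. $\rho_1/\rho_2=z_{\nu,i}/z_{\nu,j}$. Under the hypothesis that $\rho_1/\rho_2$ is not a quotient of zeros of $J_\nu$, the intersection is empty, so $\hat g=0$ and $g=0$; this proves (iii) for every $1\le p\le\infty$ and $n\ge1$. For part (i) I would first reduce \eqref{supp} to a single sphere: the radii $r_j=z_{\nu,j}/\rho$ are discrete and tend to $\infty$, so for each $j$ one can pick $\chi_j\in C_c^\infty(\mathbb R^n)$ radial, equal to $1$ near $|\xi|=r_j$ and vanishing near every other sphere of $Z_\rho$. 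Then $g*\check\chi_j\in L^p$ by Young's inequality, and its transform $\chi_j\hat g$ is supported in the \emph{single} sphere $|\xi|=r_j$. Granting the single-sphere assertion below, $g*\check\chi_j=0$, hence $\hat g=0$ near each $r_j$, and by \eqref{supp} this gives $\hat g=0$, so $g=0$.

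The heart of the matter, and the step I expect to be the main obstacle, is the single-sphere statement: if $g\in L^p(\mathbb R^n)$, $1\le p\le 2n/(n-1)$, and $\operatorname{supp}\hat g\subseteq\{|\xi|=r\}$, then $g=0$. This is a Rellich-type uniqueness phenomenon for the Helmholtz operator. Writing $\hat g$ near the sphere, via the structure theorem for distributions on a hypersurface, as $\sum_{k=0}^N\partial_s^k(\mu_k\,d\sigma_r)$ with distributions $\mu_k$ on the sphere and $\partial_s$ the transversal (radial) derivative, stationary phase shows that the inverse transform of the order-$k$ term has leading size $|x|^{\,k-(n-1)/2}$, with the order-$0$ amplitude proportional to $\mu_0$ evaluated at $\pm x/|x|$. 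Consequently every term of order $k\ge1$ must vanish, its decay being too slow to lie in $L^p$, and then the $|x|^{-(n-1)/2}$ model forces $\mu_0\equiv0$, because $|x|^{-(n-1)/2}\notin L^p$ precisely when $p\le 2n/(n-1)$ (the endpoint diverging logarithmically). I would either invoke this as a known Agmon--H\"ormander/Rellich estimate or carry out the stationary-phase computation directly. In dimension $n=1$ it degenerates to the elementary fact that a nonzero trigonometric polynomial with polynomial coefficients, which is the inverse transform of a distribution supported at finitely many points, never belongs to $L^p(\mathbb R)$ for $p<\infty$; this is exactly the case $p<\infty$ recorded in (i).

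Finally, for (ii) I would show the same threshold is sharp by exhibiting a nonzero null element. When $J_\nu(\rho)=0$ the unit sphere $\{|\xi|=1\}$ belongs to $Z_\rho$, so I set $\hat g=d\sigma$, the surface measure on $\{|\xi|=1\}$ (for $n=1$, $\hat g=\delta_1+\delta_{-1}$, i.e. $g(x)=2\cos x$). Then $\widehat{m_\rho^\a}\,\hat g=0$ by \eqref{symbol}, so $M_\rho^\a g=0$; yet $g=(d\sigma)^\vee$ is a nontrivial bounded real-analytic function whose stationary-phase asymptotics are $g(x)\sim c\,|x|^{-(n-1)/2}\cos\!\big(|x|-\tfrac{(n-1)\pi}{4}\big)$. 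Hence $g\in L^p(\mathbb R^n)$ if and only if $p>2n/(n-1)$ (and, for $n=1$, $g=2\cos x\in L^\infty$ but in no $L^p$ with $p<\infty$). Adding such a $g$ to any admissible $f$ leaves $u(\cdot,\rho)$ unchanged, defeating reconstruction and matching (i) exactly at the endpoint.
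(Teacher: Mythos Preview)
Your proposal is correct. Parts (i) and (ii) coincide with the paper's argument almost verbatim: the paper computes the multiplier $j_\nu(\rho|\xi|)$, deduces $\operatorname{supp}\hat g\subset\bigcup_j\{|\xi|=z_{\nu,j}/\rho\}$, localizes to a single sphere by a smooth cutoff, and then invokes the Agranovsky--Narayanan single-sphere lemma (which is exactly your ``Rellich-type'' step; the paper cites it rather than proving it). The paper inserts an extra radialization step in (i) that you omit, but this is cosmetic. For (ii) the paper's counterexample $\psi(x)=\int_{S^{n-1}}e^{ix\cdot\theta}\,d_*\theta$ is precisely your $(d\sigma)^\vee$.

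Part (iii) is where you genuinely diverge. You observe that the two support inclusions give $\operatorname{supp}\hat g\subset Z_{\rho_1}\cap Z_{\rho_2}=\emptyset$ directly, and you are done. The paper instead builds an explicit approximate inverse: it sets $m_\e(\xi)=e^{-\e|\xi|^2}\big(j_\nu(\rho_1|\xi|)+ij_\nu(\rho_2|\xi|)\big)^{-1}$, verifies via a Samko-type criterion that $m_\e$ lies in the Wiener algebra $\mathcal W_0(\bbr^n)$, and uses a Plancherel duality argument with the Gauss--Weierstrass approximation to conclude $\langle f,\bar g\rangle=0$ for all $g\in S(\bbr^n)$. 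Your route is shorter and conceptually cleaner; the paper's route is more constructive (it essentially writes down a recovery operator) and showcases the Wiener-algebra machinery, but is not needed for the bare injectivity statement.
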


Some parts of this theorem are known in a different form. Specifically, if $n\ge 2$, then  {\rm (i)}
 follows from the more general  result of Rawat and  Sitaram \cite[Proposition 3.1]{RS} for arbitrary compactly supported radial distributions; see  also
  Agranovsky and  Narayanan \cite [Theorem 4]{AN} and   Thangavelu \cite [Theorem 2.2] {T} for  the case $\a=0$.
 The statement (ii) is
 known  for $\a=0$; cf. \cite [formula (2.6)]{T}, \cite  [formula (1.1)]{AN}.
The statement (iii) mimics the two-radius theorem from  \cite [Theorem 2.1] {T}, corresponding to $\a=0$. See Remark  \ref{kasz}  for further comments.

\subsection{The operators $R^{\a}_\rho$}\label {polk}   Let $\cgnk$  be the Grassmann manifold  of  non-oriented $k$-dimensional
affine planes in $\bbr^n$, $1\le k\le n-1$, and let $\rho$ be a fixed positive integer.  The operators (\ref{Radon})
and (\ref{Rafe}) can be formally written as
\be\label{Radonz}
(R_\rho f)(\t)=\!\!\intl_{d(x,\t)=\rho}\!\! f(x) d_\t x, \qquad (\tilde R_\rho f)(\t)=\!\!\intl_{d(x,\t)< \rho}\!\! f(x) dx, \ee
where $\t \in \cgnk$,   $d (x, \t)$ is the  Euclidean distance between $x$ and $\t$,  $d_\t x$ is the relevant measure (the precise meaning of these integrals is given  in (\ref{Rz})).
The corresponding $k$-plane transform has the form  $(R f)(\t)=(R_\rho f)(\t)|_{\rho =0}$.

Let $V_{n, n-k} \sim O(n)/O(k)$ be the Stiefel manifold
   of all $n\times (n-k)$ real matrices $v$, the columns of which
 are mutually orthogonal unit vectors.  Then each $\t \in \cgnk$ is represented as
 \be\label{hpplk} \t\equiv \t(v, t)=\{x\in \bbr^n: v^T x =t\}, \qquad v\in
 V_{n, n-k}, \quad t\in \bbr^{n-k}. \ee
  In the following, we mostly deal with the Stiefel parametrization  (\ref{hpplk}), which suits our purposes better.
 In particular, we write the $k$-plane transform $(R f)(\t)$ as
\be\label{Rz1p} \vp_v (t)\equiv(R f) (v, t)=\intl_{v^{\perp}} f(vt +
u) \,d_v u,\ee
 where integration is performed against the Euclidean  volume element $d_v u$ on the $k$-dimensional subspace $v^{\perp}$.
 In this notation,   the integrals in (\ref {Radonz})  are explicitly written  as
\be\label{Rz}
(R_\rho f) (v, t)\!=\!\!\!\intl_{\snk} \!\!\!\!\vp_v (t \!-\! \rho\theta) d_*\theta, \quad (\tilde R_\rho f) (v, t)\!=\!\!\!\intl_{|y|<1}\!\! \vp_v (t\! -\! \rho y) dy,\ee
and resemble  the mean value operators (\ref{mf}) and (\ref{hena}). As in Problem 1, it is instructive to treat these operators as the members  of the analytic family
\bea
(R^{\a}_\rho f) (v, t)&=&  \frac{\Gamma\left(  \alpha\!+\!(n-k)/2\right)  }{\pi^{(n-k)/2}\Gamma\left(
\alpha\right)  }     \intl_{|y|<1}\!\! (1\!-\!|y|^{2})^{\alpha-1} \vp_v (t\! -\! \rho y) dy\nonumber\\
\label{Rza}
&=& (\tilde M^{\a}_\rho \vp_v) (t);\eea
cf.   (\ref{hen}) with $n$ replaced by $n-k$.
Thus the main results can be obtained if we combine Theorem \ref{laaTi}  with the properties of the $k$-plane transform. For the sake of simplicity, we restrict to  $\alpha\ge 0$.
 The following statement is well known; see, e.g., \cite [Corollary 2.4]{Ru04}.

\begin{lemma}\label {zgrn} If $f\in L^p (\rn)$, $1\le p<n/k$, then
 $(R f)(v, t)$ is finite for almost all $v$ and $t$. If  $p \ge
n/k$ and
$$ f(x)=(2+|x|)^{-n/p} (\log (2+|x|))^{-1} \quad (\in
L^p(\bbr^n)),$$
 then $(R f)(v, t) \equiv \infty$.
\end {lemma}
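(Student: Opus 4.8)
The plan is to treat the two complementary ranges of $p$ separately: for $1\le p<n/k$ the finiteness of $(Rf)(v,t)$ will follow from a single Tonelli computation combined with an estimate for the measure of those frames whose associated subspace almost contains a given point, while for $p\ge n/k$ the divergence is exhibited directly by the stated radial example. For the finiteness claim I would first reduce to showing that, for each $m\in\bbn$,
\[
I_m=\intl_{|t|\le m}\,\intl_{V_{n,n-k}}(R|f|)(v,t)\,dv\,dt<\infty ,
\]
since this forces $(R|f|)(v,t)<\infty$ for almost all $(v,t)$ with $|t|\le m$, and a union over $m$ then yields a.e.\ finiteness on all of $V_{n,n-k}\times\rnk$.

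The main step is the evaluation of $I_m$. For fixed $v$ the substitution $x=vt+u$, $u\in v^{\perp}$, is a measure-preserving bijection of $\rnk\times v^{\perp}$ onto $\rn$ under which $v^Tx=t$, so that $|v^Tx|=|t|=d(x,v^{\perp})$. Since the integrand is nonnegative, interchanging the order of integration yields
\[
I_m=\intl_{\rn}|f(x)|\,w_m(x)\,dx,\qquad w_m(x)=\intl_{V_{n,n-k}}\mathbf{1}\{\,|v^Tx|\le m\,\}\,dv .
\]
It then remains to bound the weight $w_m$. Writing $x=r\om$ with $r=|x|$ and $\om\in\sn$, the quantity $|v^Tx|=r\,|v^T\om|$ is $r$ times the length of the orthogonal projection of $\om$ onto the $(n-k)$-dimensional column space of $v$; as $v$ runs over the Stiefel manifold this squared projection length is $\mathrm{Beta}\big((n-k)/2,\,k/2\big)$-distributed, with density $\sim q^{(n-k)/2-1}$ as $q\to 0^{+}$. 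Hence $w_m(x)\lesssim (1+|x|)^{-(n-k)}$ uniformly in $x$, and Hölder's inequality with exponents $p,p'$ gives
\[
I_m\lesssim \|f\|_p\Big(\intl_{\rn}(1+|x|)^{-(n-k)p'}\,dx\Big)^{1/p'} .
\]
The last integral converges precisely when $(n-k)p'>n$, i.e.\ when $p<n/k$, which proves the first assertion.

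For the sharpness I would argue directly with $f_0(x)=(2+|x|)^{-n/p}(\log(2+|x|))^{-1}$. First one checks $f_0\in L^p(\rn)$: in polar coordinates $\|f_0\|_p^p\sim\int_1^{\infty}r^{-1}(\log r)^{-p}\,dr$, which converges because $p\ge n/k>1$. Since $f_0$ is radial and positive, $(Rf_0)(v,t)$ is independent of $v$ and depends only on $a=|t|$; passing to polar coordinates in the $k$-plane $v^{\perp}$ gives
\[
(Rf_0)(v,t)=\om_{k-1}\int_0^{\infty}\frac{s^{k-1}\,ds}{\big(2+\sqrt{a^2+s^2}\,\big)^{n/p}\,\log\big(2+\sqrt{a^2+s^2}\,\big)},
\]
where $\om_{k-1}=|S^{k-1}|$. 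Inspecting the tail $s\to\infty$, where $\sqrt{a^2+s^2}\sim s$, the integrand behaves like $s^{\,k-1-n/p}(\log s)^{-1}$, which is non-integrable exactly when $k-1-n/p\ge -1$, i.e.\ when $p\ge n/k$; the logarithmic factor is precisely what forces divergence in the endpoint case $p=n/k$. Hence $(Rf_0)(v,t)\equiv\infty$.

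I expect the only non-routine ingredient to be the decay rate of the weight $w_m$: the exponent $n-k$, and with it the threshold $n/k$, comes from the behaviour of the projection-length distribution near $0$, and pinning down this exponent is the crux. Everything else—the change of variables, Tonelli, Hölder, and the elementary tail analysis—is routine. One could replace the appeal to the Beta distribution by a purely geometric estimate of the measure of frames $v$ with $|v^Tx|\le m$, but the same exponent, and hence the same critical value $p=n/k$, would inevitably reappear.
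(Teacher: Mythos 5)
Your proof is correct, but note that the paper itself does not prove this lemma at all: it is quoted as well known, with a pointer to \cite[Corollary 2.4]{Ru04}, so there is no internal proof to compare against. In substance, your argument is the standard one behind that citation (going back to Solmon's work on $k$-plane transforms): a Tonelli/duality computation transferring the question to a dual weight on $\mathbb{R}^n$, a decay estimate of order $(1+|x|)^{-(n-k)}$ for that weight, and H\"older's inequality, which produces exactly the threshold $p<n/k$. The cited source uses a polynomial weight such as $(1+|t|^2)^{-\lambda}$ where you use the truncation $\mathbf{1}\{|t|\le m\}$, and a direct geometric estimate of the measure of frames nearly orthogonal to a fixed direction where you invoke the $\mathrm{Beta}\bigl((n-k)/2,\,k/2\bigr)$ law of $|v^T\omega|^2$; these are cosmetic variants of the same estimate, and your identification of the exponent $n-k$ is the correct crux. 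Your divergence computation for $p\ge n/k$ is likewise the standard one, and your polar-coordinate formula for the transform of a radial function is equivalent to an application of Lemma \ref{zhujun}. Two trivial points to tidy: at $p=1$ the H\"older step should read $I_m\le \|f\|_1\,\|w_m\|_\infty$ rather than an integral condition on $(n-k)p'$; and the implied constant in $w_m(x)\lesssim (1+|x|)^{-(n-k)}$ depends on $m$, which is harmless since $m$ is fixed before the estimate but should be said.
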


According to this lemma and the equality (\ref{Rza}), our consideration will be necessarily restricted to the values $1\le p<n/k$.

The most complete result is obtained for $k=n-1$.

\begin{theorem}\label {lzzt} Let $\alpha\ge 0$,  $n\ge 2$, $k=n-1$.
 Then $R_\rho^{\a}$ is  injective on  $L^p (\rn)$   for any fixed $\rho >0$ and any $1 \le p < n/(n-1)$.
\end{theorem}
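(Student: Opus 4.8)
The plan is to exploit the factorization recorded in (\ref{Rza}), which presents $R^{\a}_\rho$ as the composition $R^{\a}_\rho=\tilde M^{\a}_\rho\circ R$: one first applies the $k$-plane transform $R$, producing the slices $\vp_v=(Rf)(v,\cdot)$, and then applies in the variable $t$ the shifted operator $\tilde M^{\a}_\rho$, which by the remark following (\ref{Rza}) is the operator $M^{\a}_\rho$ of (\ref{hen}) acting in dimension $n-k$. Since $k=n-1$ gives $n-k=1$, this inner operator is precisely the one-dimensional EPD operator with parameter $\a\ge 0=(1-1)/2$. Injectivity of the composition will therefore follow once I establish injectivity of each factor on the relevant spaces, so the argument splits into two steps.

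For the first step I would suppose $f\in L^p(\rn)$, $1\le p<n/(n-1)$, satisfies $R^{\a}_\rho f=0$. By Lemma \ref{zgrn} the slices $\vp_v(t)=(Rf)(v,t)$ are finite for almost all $(v,t)$, so (\ref{Rza}) together with Fubini's theorem yields, for almost every $v$, the identity $(\tilde M^{\a}_\rho\vp_v)(t)=0$ for almost every $t\in\bbr$. I then want to conclude $\vp_v\equiv 0$ for almost every $v$. Because $n-k=1$, the uniqueness assertion of Theorem \ref{laaTi}{\rm (i)} applies in its one-dimensional form, which is valid for every exponent strictly less than $\infty$; hence it suffices to know that, for almost every $v$, the slice $\vp_v$ lies in $L^{\tilde p}(\bbr)$ for some finite $\tilde p$. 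Granting this, Theorem \ref{laaTi}{\rm (i)} forces $\vp_v=0$, and therefore $Rf=0$.

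The point I expect to be the main obstacle is exactly this integrability of the slices: mere finiteness almost everywhere is too weak to be fed into a global $L^{\tilde p}$ uniqueness theorem. I would remove it with a mixed-norm mapping estimate for the hyperplane transform on the range $1\le p<n/(n-1)$, of the schematic form
\[
\Big(\intl_{\sn}\|\vp_v\|_{L^{\tilde p}(\bbr)}^{\,r}\,dv\Big)^{1/r}\le C\,\|f\|_{L^p(\rn)}
\]
for suitable finite $\tilde p,r$, which immediately gives $\|\vp_v\|_{L^{\tilde p}(\bbr)}<\infty$ for almost every $v$; these mapping properties of $R$ are classical and belong in the preliminaries. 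I note that, since $n/(n-1)\le 2$, one can alternatively bypass the slice estimate by arguing on the Fourier side: the central slice theorem gives $\widehat{\vp_v}(\tau)=\hat f(\tau v)$, the vanishing of $R^{\a}_\rho f$ reads $\hat m^{\a}_\rho(\tau)\,\hat f(\tau v)=0$ with $\hat m^{\a}_\rho$ the one-dimensional Bessel-type multiplier of $\rho\tau$, and since this multiplier has only a discrete zero set while $\hat f\in L^{p'}(\rn)$ is a genuine function by Hausdorff--Young, one concludes $\hat f=0$ a.e., i.e. $f=0$ in one stroke.

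For the second step, having reached $Rf=0$, I would invoke injectivity of the hyperplane Radon transform on $L^p(\rn)$ for $1\le p<n/(n-1)$ (the classical Solmon range, which coincides with the range $p<n/k$ of Lemma \ref{zgrn}) to deduce $f=0$. The proof is thus a clean two-factor reduction, the only delicate ingredient being the passage from pointwise finiteness of the slices $\vp_v$ to their membership in a fixed $L^{\tilde p}(\bbr)$.
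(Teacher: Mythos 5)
Your argument is correct and is essentially the paper's own proof: the factorization (\ref{Rza}), followed by the mixed-norm slice estimate you postulate --- which is precisely the Oberlin--Stein bound (Theorem \ref{Oberlin}), giving $\vp_v\in L^r(\bbr)$ with $r^{-1}=np^{-1}-n+1$ finite on the range $1\le p<n/(n-1)$ --- then the one-dimensional injectivity of $M^{\a}_\rho$ (Proposition \ref{laa}(i), valid for all finite exponents since $n-k=1$), and finally the injectivity of the hyperplane Radon transform on $L^p(\rn)$, $p<n/(n-1)$. Your alternative Fourier-slice sketch is a reasonable extra observation (though it would need justification of the distributional multiplier relation for slices in $L^r$ with $r>2$), but the main route you propose coincides step for step with the paper's.
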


The case $k<n-1$ is  less complete.
\begin{theorem}\label {lzzt2} Let $\alpha\ge 0$, $f \in L^p (\rn)$, $1 \le p < n/k$,  $1\le k\le n-2$.

\vskip 0.2 truecm

\noindent {\rm (i)} If $\;1\le p\le 2n/(n+k-1)$, then $R_\rho^{\a}$ is  injective on  $L^p (\rn)$   for any fixed $\rho >0$.

\vskip 0.2 truecm

\noindent {\rm (ii)} If $\;2n/(n-1) <p <n/k$ and $ J_{(n-k)/2 +\a -1} (\rho)=0$,
   then $R_\rho^{\a}$ is not injective on  $L^p (\rn)$.

\vskip 0.2 truecm

\noindent {\rm (iii)} The cases
\bea n&=&3:  \quad k=1, \qquad 2<p < 3;\nonumber\\
 n&=&4: \quad  k=1, \qquad   2<p \le 8/3;\nonumber\\
 n&=&4: \quad  k=2, \qquad   8/5<p < 2;\nonumber\\
 n&\ge& 5: \quad k< (n-1)/2, \qquad 2n/(n+k-1)<p\le 2n/(n-1). \nonumber\eea
remain unresolved. \footnote {Note that $k< (n-1)/2$ in the last line is equivalent to $2n/(n-1)<n/k$, where $n/k$ is the upper bound for $p$.}

\vskip 0.2 truecm

\noindent {\rm (iv)} For all $\alpha\ge 0$, the function $f \in L^p (\rn)$, $1 \le p < n/k$, can be uniquely reconstructed from $R_\rho^{\a}f $  if the latter is known for any two  fixed values $\rho=\rho_1$ and $\rho=\rho_2$, provided that $\rho_1/\rho_2$ is not a quotient of zeros of the Bessel function $ J_{(n-k)/2 +\a -1}$.

\end{theorem}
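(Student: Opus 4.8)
The plan is to exploit the factorization (\ref{Rza}), $(R_\rho^{\alpha} f)(v,t)=(\tilde M_\rho^{\alpha}\vp_v)(t)$, in which $\vp_v=(Rf)(v,\cdot)$ is the $k$-plane transform (\ref{Rz1p}) and $\tilde M_\rho^{\alpha}$ is the Euler--Poisson--Darboux mean-value operator acting in the variable $t\in\bbr^{n-k}$. This splits every assertion into two independent parts: the injectivity behaviour of the EPD operator in dimension $m=n-k$, furnished by Theorem \ref{laaTi} with $n$ replaced by $n-k$, and the injectivity of the $k$-plane transform $R$ itself on $L^p(\rn)$ for $1\le p<n/k$, which is classical. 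The link between the two is the integrability of the slice $\vp_v$ in the variable $t$, since Theorem \ref{laaTi} is stated in $L^q$-terms.

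For part (i) the decisive quantitative input is the mapping property of the $k$-plane transform: if $f\in L^p(\rn)$ with $1\le p<n/k$, then for almost every $v\in V_{n,n-k}$ the slice $\vp_v$ lies in $L^q(\bbr^{n-k})$ with the scaling-critical exponent $\frac1q=\frac1{n-k}\big(\frac np-k\big)$, that is $q=\frac{p(n-k)}{n-kp}$. A direct computation shows that the endpoint $p=2n/(n+k-1)$ corresponds exactly to $q=2(n-k)/(n-k-1)$, which is the injectivity threshold of Theorem \ref{laaTi}(i) in dimension $n-k$, while as $p$ decreases to $1$ the exponent $q$ decreases to $1$, remaining inside the admissible range. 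Hence, if $R_\rho^{\alpha}f=0$, then (\ref{Rza}) gives $\tilde M_\rho^{\alpha}\vp_v=0$ for a.e. $v$, Theorem \ref{laaTi}(i) forces $\vp_v=0$, i.e. $Rf=0$, and injectivity of $R$ on $L^p$ yields $f=0$. The main obstacle here is precisely this per-slice bound: a \emph{fixed} direction $v$ admits no $L^p\to L^q$ estimate (product functions $g(t)h(u)$ with $h\in L^p\setminus L^1$ give $\vp_v\equiv\infty$), so the estimate must be obtained in averaged form, e.g. $\int_{V_{n,n-k}}\|\vp_v\|_{L^q}^{s}\,dv\lesssim\|f\|_p^{s}$, from which a.e. finiteness is extracted, and the endpoint exponent bookkeeping must be checked.

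For part (ii) I would exhibit a nonzero kernel element on the Fourier side. By the slice relation $\widehat{\vp_v}(\xi)=\hat f(v\xi)$ and the fact that the multiplier $\hat m_\rho^{\alpha}(\xi)$ of $\tilde M_\rho^{\alpha}$ is a constant multiple of $(\rho|\xi|)^{-\nu}J_\nu(\rho|\xi|)$ with $\nu=(n-k)/2+\alpha-1$, the operator $R_\rho^{\alpha}$ annihilates any $f$ whose Fourier transform is concentrated on the sphere $|\xi|=z/\rho$, where $z$ is a zero of $J_{(n-k)/2+\alpha-1}$; indeed at such points the multiplier equals $J_{(n-k)/2+\alpha-1}(z)=0$. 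Choosing $\hat f$ to be the surface measure of that sphere gives $f(x)=c\,|x|^{-(n-2)/2}J_{(n-2)/2}(z|x|/\rho)$, which decays like $|x|^{-(n-1)/2}$ and therefore belongs to $L^p(\rn)$ exactly when $p>2n/(n-1)$; combined with the requirement $p<n/k$ for $R_\rho^{\alpha}f$ to be defined, this yields a nontrivial kernel on the stated range (nonempty precisely when $k<(n-1)/2$). The point to verify is that the identity $R_\rho^{\alpha}f=0$, evident on the Fourier side, is legitimate for such $f$, which calls for a distributional justification since $p>2$.

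For part (iv) the two-radius statement follows from the same factorization together with Theorem \ref{laaTi}(iii) in dimension $n-k$: if $R_{\rho_1}^{\alpha}f=R_{\rho_2}^{\alpha}f=0$, then $\tilde M_{\rho_1}^{\alpha}\vp_v=\tilde M_{\rho_2}^{\alpha}\vp_v=0$ for a.e. $v$, and because the zero sets of the two Bessel multipliers are disjoint exactly when $\rho_1/\rho_2$ is not a quotient of zeros of $J_{(n-k)/2+\alpha-1}$, the distribution $\widehat{\vp_v}$ must vanish, so $\vp_v=0$ and then $f=0$ by injectivity of $R$. Since Theorem \ref{laaTi}(iii) holds for all $1\le q\le\infty$, no integrability restriction beyond the a.e. finiteness of $Rf$ (Lemma \ref{zgrn}) is needed, and the conclusion covers the full range $1\le p<n/k$. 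Part (iii) requires no argument: it only records the exponent ranges left uncovered by (i) and (ii), i.e. the gap between $2n/(n+k-1)$ and $\min\{2n/(n-1),\,n/k\}$.
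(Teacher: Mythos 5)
Your strategy is the same as the paper's --- factor $R_\rho^{\a}$ through the $k$-plane transform via (\ref{Rza}) and apply the EPD results in dimension $n-k$ to the slices $\vp_v$, finishing with injectivity of $R$ --- but your part (i) rests on an input that is not a theorem. You assert, as the ``decisive quantitative input,'' that for \emph{every} $f\in L^p(\rn)$ with $1\le p<n/k$ the slices $\vp_v$ lie in $L^q(\bbr^{n-k})$, $q=p(n-k)/(n-kp)$, for almost all $v$. In that generality this is an open problem: the known mixed-norm estimates (Theorem \ref{CDru}, due to Christ \cite{Chr84} and Drury \cite{Dr89}) give it only for $1\le p\le (n+1)/(k+1)$ (all $k$) and for the full range $1\le p<n/k$ when $k>n/2$; the remaining region is explicitly unresolved. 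Your proof of (i) can be repaired because the range you actually need, $1\le p\le 2n/(n+k-1)$, lies inside the Christ range --- one must check $2n/(n+k-1)\le (n+1)/(k+1)$ for $1\le k\le n-2$, equivalently $n(n-k-2)+(k-1)\ge 0$ --- but you neither cite such an estimate nor perform this comparison; you defer it as ``averaged estimates'' and ``endpoint bookkeeping to be checked.'' That deferred step is exactly the technical substance of the paper's proof of (i), which combines Theorem \ref{CDru} with the threshold $r\le 2(n-k)/(n-k-1)$ (equivalent to $p\le 2n/(n+k-1)$) through the case analysis $({\rm A})$, $({\rm B})$, $({\rm A}_1)$, $({\rm A}_2)$. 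Your endpoint computation ($p=2n/(n+k-1)$ corresponds to $q=2(n-k)/(n-k-1)$) is correct, but as written (i) has a genuine gap.

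Two further points. In (ii) you choose the right counterexample (up to dilation it is the paper's $f(x)=j_{n/2-1}(|x|)$), but you leave the key identity $R_\rho^{\a}f=0$ at the level of ``evident on the Fourier side, calls for a distributional justification'': since $p>2$, $\hat f$ is only a tempered distribution and the projection-slice step is not free. The paper does this verification on the physical side: Lemma \ref{zhujun} plus the Erd\'elyi--Kober identity (\ref{ecis}) give $\vp_v(t)=c\,j_{(n-k)/2-1}(|t|)$ (with absolute convergence precisely because $k<(n-1)/2$ on the stated range), and then the eigenfunction relation (\ref{nrwz3}) in dimension $n-k$ yields (\ref{lqrew}); you would need to supply this or an equivalent argument. (A merit of your version: taking the sphere $|\xi|=z/\rho$ for an arbitrary zero $z$ would, once justified, prove non-injectivity for \emph{every} $\rho$, which is stronger than the stated (ii) --- all the more reason the justification cannot be waved at.) Finally, in (iv) your claim that ``no integrability restriction beyond a.e.\ finiteness of $Rf$ is needed'' overstates what is available: Proposition \ref{laaT} (Theorem \ref{laaTi}(iii)) is stated and proved for functions in $L^q$, $1\le q\le\infty$, and its Fourier-analytic proof requires the slices $\vp_v$ to be at least tempered; the a.e.\ finiteness of Lemma \ref{zgrn} alone is not a license to apply it, so this point needs an argument rather than a remark.
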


\subsection {Examples} In the following, the  terminology for  different particular cases of $R_\rho^{\a}$ is provisional.
The parameter  $\rho >0$ is  fixed. We recall that $p$ is necessarily less than $n/k$; cf. Lemma \ref{zgrn}.

\vskip 0.2 truecm

$\bullet$ {\it  Strips  in $\bbr^2$} ($k=1$, $\a=1$): \\
${}\qquad $ $R_\rho^{1}$ is injective on $L^p (\bbr^2)$ for all $1\le p <2$.

\vskip 0.2 truecm

$\bullet$ {\it Slabs  in $\bbr^3$} ($k=2$, $\a=1$): \\
${}\qquad $ $R_\rho^{1}$ is injective on $L^p (\bbr^3)$ for all $1\le p <3/2$.

\vskip 0.2 truecm

$\bullet$ {\it Pipes  in $\bbr^3$} ($k=1$, $\a=0$): \\
${}\qquad $ $R_\rho^{0}$ is  injective on $L^p (\bbr^3)$ for all $1\le p \le 2$; \\
${}\qquad $  {\bf unresolved} for $2<p < 3$.

\vskip 0.2 truecm

$\bullet$ {\it Solid tubes  in $\bbr^3$} ($k=1$, $\a=1$);\\
${}\qquad $  $R_\rho^{1}$ is injective on $L^p (\bbr^3)$ for all $1\le p \le 2$;\\
${}\qquad $   {\bf unresolved} for $2<p < 3$.

\vskip 0.2 truecm

$\bullet$ {\it Pipes in $\bbr^4$} ($k=1$, $\a=0$): \\
${}\qquad $ $R_\rho^{0}$ is  injective on $L^p (\bbr^4)$ for all $1\le p \le 2$; \\
${}\qquad $  {\bf unresolved} for $2<p \le 8/3$; \\
${}\qquad $ non-injective if $8/3 < p<  4$  and  $J_{1/2} (\rho)=0$.

\vskip 0.2 truecm

$\bullet$ {\it Solid tubes  in $\bbr^4$} ($k=1$, $\a=1$): \\
${}\qquad $ $R_\rho^{1}$ is injective on $L^p (\bbr^4)$ for all $1\le p \le 2$; \\
${}\qquad $   {\bf unresolved} for $2<p \le 8/3$; \\
${}\qquad $ non-injective if $8/3 < p<  4$ and  $J_{3/2} (\rho)=0$.

An interested reader may continue this list of  examples by considering, for instance, the pairs of parallel
lines in $\bbr^2$ ($k=1$, $\a=0$) or the pairs of parallel planes in $\bbr^3$ ($k=2$, $\a=0$).

\section{ Preliminaries}

\subsection{ Notation}
${}$

\noindent In the following, $\bbr^{n}$  is the real $n$-dimensional Euclidean space;
$S^{n-1} \subset \rn$ is the $(n-1)$-dimensional unit sphere with the  normalized surface area measure $d_*\theta$.
 The points in $\bbr^{n}$ are identified with the corresponding column vectors.
  The notation $\langle f,g\rangle$ for  functions $f$ and $g$ is used for the integral of the product of these functions.
 We keep the same notation  when $f$ is a distribution and $g$ is a test function.

Given an integer $k$,  $1\le k \le n-1$,   let
$V_{n, n-k}$ be the Stiefel manifold  of orthonormal  $(n-k)$-frames in $\bbr^{n}$. Every element $v\in V_{n, n-k}$ is an  $n\times (n-k)$  matrix  satisfying
$v^T v=I_{n-k}$, where $v^T$ is the transpose of $v$ and $I_{n-k}$ is the identity $(n-k)\times (n-k)$ matrix;
 $v^\perp$ is the
 $k$-dimensional linear subspace of $\bbr^{n}$ orthogonal to $v$.

The Fourier transform  of a function
$f \in  L^1 (\bbr^n)$ is defined by
\be \label{ft}  \hat f (\xi) = \intl_{\bbr^{ n}} f(x) \,e^{ i x \cdot \xi} \,dx, \qquad \xi\in \rn,
\ee
where $x \cdot \xi = x_1 \xi_1 + \ldots + x_n\xi_n$.  We denote by $S(\bbr^n)$
the Schwartz space of $C^\infty$-functions
which are rapidly
decreasing together with their derivatives of all orders.
The space of tempered distributions, which is dual to $S(\bbr^n)$, is denoted by $S'(\bbr^n)$.  The Fourier transform of a distribution $f\in S'(\bbr^n)$ is  a
distribution $\hat f\in S'(\bbr^n)$ defined by
\be\label{ftrd12}
\lang \hat f,g \rang =\lang f,\hat g \rang,\qquad g\in S(\bbr^n). \ee
The inverse Fourier transform of a function (or distribution) $f$ is denoted by $\check f$.
 If $f\in L^1(\bbr^n)$, then $\check f(x) = (2\pi)^{-n} \hat f (-x)$.

 A function (or distribution) $f$ on $\rn$ is said to be radial if $f=f\circ \gam$ for every orthogonal transformation  $\gam \in O (n)$. The case $n=1$ corresponds to even functions or distributions.

\subsection{Bessel functions}

For $r>0$, the  Bessel function of the first kind \cite{Er} has the form
\[
J_\nu (z)=\sum\limits_{m=0}^\infty \frac{(-1)^m \, (r/2)^{2m +\nu}}{m!\, \Gam (m+\nu +1)}.
\]
We normalize $J_\nu (r)$ by setting
\be\label{nrfl}
j_\nu (r)= \Gam (\nu +1) (r/2)^{-\nu}J_\nu (r).\ee
Then
\be\label{nrfl2}
\lim\limits_{r \to 0}j_\nu (r) =1; \qquad j_\nu (r)=O(r^{-\nu-1/2})\quad \text{\rm as }\quad r\to \infty.\ee

Denote
\be\label{nrwz}
\psi (x) =\intl_{\sn} e^{ i x \cdot \theta} \,d_*\theta, \qquad   x\in \rn.\ee
Then
\be\label{nrwz1}
\psi (x)= j_{n/2 -1}(|x|).\ee
By (\ref{nrfl2}),
\be\label{nuilp}
\psi  \in L^p (\rn), \qquad p\,  \left \{ \begin{array} {ll}  > \,2n/(n-1)& \mbox{if $n>1$,}\\
=\infty & \mbox{if $n=1$.}\\
\end{array}
\right.\ee

We will be dealing with the Erd\'elyi-Kober type fractional integrals \cite [Section 2.6.2]{Ru15} defined by
\be
\label{eci} (I^\a_{-, 2} f)(r)=\frac{2}{\Gam
(\a)}\intl_r^\infty (s^2 - r^2)^{\a -1} f (s)  \, s\, ds, \qquad  \a >0.\ee
By \cite[formula 2.12.4(17)]{PBM},
\be
\label{ecis} (I^\a_{-, 2} j_\nu)(r)=\frac{2^{2\a} \Gam (\nu +1)}{\Gam (\nu -\a +1)}\, j_{\nu-\a} (r), \qquad 2\a -\nu <\frac{3}{2}.\ee

\subsection{On  the Wiener algebra}
Let
\[\W_0 (\rn) =\{f: f(\xi) = \hat g (\xi), \;g \in  L^1 (\rn) \}\]
 be the  Wiener algebra (or Wiener's ring) of the Fourier transforms of $L^1$-functions.

\begin{lemma}\label{ring} {\rm \cite [Lemma 1.3]{Sa77}, \cite [Lemma 1.22] {Sa}}  Let $f\in L^1 (\rn)$. If $f$ has  mixed derivatives $\partial^j f$ belonging to $ L^p (\rn)$ for
some $1<p\le 2$ and  all multi-indices $j \in \{0,1\}^n$, $j\neq 0$,  then $f\in  \W_0 (\rn)$.
\end{lemma}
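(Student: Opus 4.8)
The plan is to reduce the statement to the single assertion that $\hat f \in L^1(\rn)$. Once this is known, Fourier inversion (legitimate because $f\in L^1$) identifies $f$ with the Fourier transform of $\check f\in L^1(\rn)$, and hence $f\in \W_0(\rn)$ by the very definition of the Wiener algebra. The first step toward $\hat f\in L^1$ is to convert the $L^p$ bounds on the derivatives into weighted $L^{p'}$ bounds on $\hat f$ by the Hausdorff--Young inequality. Since $1<p\le 2$, the Fourier transform maps $L^p(\rn)$ boundedly into $L^{p'}(\rn)$ with $p'=p/(p-1)\in[2,\infty)$. Applying this to each hypothesis $\part^j f\in L^p$ and using $\widehat{\part^j f}(\xi)=(-i)^{|j|}\big(\prod_{l:\,j_l=1}\xi_l\big)\hat f(\xi)$, I would obtain
\[
\Big(\prod_{l\in A}\xi_l\Big)\hat f\in L^{p'}(\rn)\qquad\text{for every nonempty }A\subseteq\{1,\dots,n\},
\]
where $A$ denotes the support of the multi-index $j\in\{0,1\}^n$.

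Next I would dispose of the missing case $A=\emptyset$, that is, prove $\hat f\in L^{p'}(\rn)$ directly. Because $f\in L^1$, the function $\hat f$ is bounded, so $\hat f\in L^{p'}$ on the unit ball; on $\{|\xi|>1\}$ I would use the single--coordinate derivatives (the cases $A=\{l\}$): from $|\xi|\,|\hat f(\xi)|\le \sum_l |\xi_l|\,|\hat f(\xi)|\in L^{p'}$ together with $|\hat f|\le |\xi|\,|\hat f|$ on $\{|\xi|>1\}$ it follows that $\hat f\in L^{p'}(\{|\xi|>1\})$, whence $\hat f\in L^{p'}(\rn)$. The decisive step is then to assemble all these estimates against the separable weight $w(\xi)=\prod_{l=1}^n(1+|\xi_l|)$. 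Expanding the product yields $w(\xi)=\sum_{A\subseteq\{1,\dots,n\}}\prod_{l\in A}|\xi_l|$, so $w\,|\hat f|$ is a finite sum of the $L^{p'}$ functions produced above and therefore $w\hat f\in L^{p'}$. Finally, Hölder's inequality with the conjugate exponents $p'$ and $p$ gives
\[
\intl_{\rn}|\hat f(\xi)|\,d\xi=\intl_{\rn}\big(w(\xi)\,|\hat f(\xi)|\big)\,w(\xi)^{-1}\,d\xi\le \|w\hat f\|_{p'}\,\|w^{-1}\|_{p},
\]
and $\|w^{-1}\|_p^{\,p}=\prod_{l=1}^n\int_{\bbr}(1+|t|)^{-p}\,dt<\infty$ precisely because $p>1$. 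This yields $\hat f\in L^1(\rn)$ and closes the argument.

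I do not expect a genuine obstacle here, since the lemma is of a classical sufficient--condition type; the only delicate points are bookkeeping ones. The structural heart of the proof is the factorization of $w$: the hypotheses on $\part^j f$ for \emph{all} $j\in\{0,1\}^n$ are exactly what is needed so that every term $\prod_{l\in A}\xi_l\,\hat f$ lies in $L^{p'}$, and hence so that the product weight $w\hat f$ does. The two endpoints play complementary and transparent roles: $p\le 2$ is used solely to apply Hausdorff--Young, whereas $p>1$ is indispensable for the integrability $\|w^{-1}\|_p<\infty$, which would fail at $p=1$. The separate treatment of the term $A=\emptyset$ (equivalently, establishing $\hat f\in L^{p'}$ from $f\in L^1$ and the first--order derivatives) is the one place requiring a small split of the integration domain, and it is what allows the use of the regularized weight $1+|\xi_l|$ rather than the non--integrable $|\xi_l|$.
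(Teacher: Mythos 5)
Your proof is correct, and there is nothing in the paper to compare it against: the lemma is quoted there without proof, with attribution to Samko (\cite[Lemma 1.3]{Sa77}, \cite[Lemma 1.22]{Sa}), and your argument --- Hausdorff--Young applied to each $\partial^j f$, the expansion of the separable weight $w(\xi)=\prod_{l=1}^n(1+|\xi_l|)$ into the terms $\prod_{l\in A}|\xi_l|$, and the final H\"older estimate $\int_{\rn}|\hat f|\le \|w\hat f\|_{p'}\|w^{-1}\|_{p}$ with $\|w^{-1}\|_p<\infty$ exactly because $p>1$ --- is precisely the classical argument behind the cited result, including the correct separate treatment of $A=\emptyset$. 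The only point worth stating explicitly is that the identity $\widehat{\partial^j f}=(-i)^{|j|}\xi^j\hat f$ is an identity in $S'(\rn)$ between two locally integrable functions (the left side lies in $L^{p'}$ by Hausdorff--Young, the right side is a polynomial times the bounded continuous function $\hat f$), hence holds a.e., which is what your pointwise manipulations require.
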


\begin{corollary} \label {corr} {\rm \cite [Lemma 1.3]{Ru89}}
Let $m(\xi)=\mu (|\xi|)\in  L^1 (\rn)$,
\[m_k(\xi)= \mu_k (|\xi|), \qquad \mu_k (r)= r^k \left (\frac {1}{r}\, \frac {d}{dr}\right )^k \mu (r).\]
If $m_k\in  L^p (\rn)$ for some $1<p\le 2$ and all $k=1,2,\ldots, n$, then  $m \in  \W_0 (\rn)$.
\end{corollary}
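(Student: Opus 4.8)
The plan is to deduce the corollary from Lemma~\ref{ring} applied to $f=m$. Since $m\in L^1(\rn)$ is given, it suffices to check that every mixed derivative $\partial^j m$ with $j\in\{0,1\}^n$, $j\neq 0$, lies in $L^p(\rn)$ for the same $p$. Because $m(\xi)=\mu(|\xi|)$ is radial, the idea is to express these Cartesian mixed partials in terms of the purely radial quantities $m_k$, and then dominate them pointwise. Write $r=|\xi|$ and $D=r^{-1}\,d/dr$, so that $\mu_k=r^kD^k\mu$ and $m_k(\xi)=r^k(D^k\mu)(r)$.

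First I would establish, by induction on $s$, the identity
\[
\partial_{i_1}\cdots\partial_{i_s}\,\mu(r)=(D^s\mu)(r)\,\xi_{i_1}\cdots\xi_{i_s}
\qquad\text{for \emph{distinct} indices } i_1,\dots,i_s.
\]
The base case is $\partial_i\mu(r)=\mu'(r)\,\xi_i/r=(D\mu)(r)\,\xi_i$. For the inductive step, differentiating the right-hand side in a new variable $\xi_{i_{s+1}}$ and using the product rule produces two terms; the term in which $\partial_{i_{s+1}}$ hits one of the monomial factors vanishes precisely because the indices are distinct (each variable is differentiated at most once, as $j\in\{0,1\}^n$), while the remaining term equals $(D^{s+1}\mu)(r)\,\xi_{i_1}\cdots\xi_{i_{s+1}}$ by the base-case computation applied to $D^s\mu$ in place of $\mu$. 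Consequently, for $j\in\{0,1\}^n$ with $|j|=s$ one has $\partial^j m=(D^s\mu)(r)\,\xi^j$, where $\xi^j=\prod_{i:\,j_i=1}\xi_i$.

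Next, since $|\xi_i|\le r$ for every $i$, the monomial obeys $|\xi^j|\le r^{s}$, and therefore
\[
\bigl|\partial^j m(\xi)\bigr|=\bigl|(D^s\mu)(r)\bigr|\,|\xi^j|\le r^{s}\,\bigl|(D^s\mu)(r)\bigr|=\bigl|m_{s}(\xi)\bigr|,\qquad s=|j|.
\]
As $j$ ranges over $\{0,1\}^n\setminus\{0\}$ the order $|j|$ takes every value in $\{1,\dots,n\}$, and by hypothesis $m_1,\dots,m_n\in L^p(\rn)$; the pointwise bound then gives $\partial^j m\in L^p(\rn)$ for all such $j$. Lemma~\ref{ring} now yields $m\in\W_0(\rn)$.

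The computation is elementary, so the only point requiring care is regularity: the identity for $\partial^j m$ and the resulting domination must be read in the almost-everywhere (equivalently weak-derivative) sense compatible with the hypotheses of Lemma~\ref{ring}. This is not a genuine obstacle here, since the very definition of $m_k$ presupposes the existence of the radial derivatives $D^k\mu$ up to order $n$, and any singular behaviour at the origin is already controlled by the assumption $m_k\in L^p(\rn)$. The essential ingredient is the restriction $j\in\{0,1\}^n$: differentiating each variable at most once keeps all derivatives of order $\le n$ and eliminates the cross terms, which is exactly what allows the clean pointwise bound $|\partial^j m|\le |m_{|j|}|$.
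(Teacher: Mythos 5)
Your proposal is correct and is essentially the intended argument: the paper states this result as a corollary of Lemma~\ref{ring} (citing \cite{Ru89} without reproducing the proof), and the derivation is exactly your reduction --- the chain-rule identity $\partial^j m=(D^{|j|}\mu)(r)\,\xi^j$ for $j\in\{0,1\}^n$ with distinct differentiated variables, the pointwise bound $|\partial^j m|\le|m_{|j|}|$ coming from $|\xi^j|\le r^{|j|}$, and then an application of Lemma~\ref{ring}. Your closing remark about reading the derivatives in the weak/a.e.\ sense is the right way to handle the regularity issue, so no gap remains.
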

This corollary  is especially useful when $m(\xi)$ is a function of  $|\xi|^2$.


\section{Proof of Theorem \ref{laaTi}}

The proof of Theorem \ref{laaTi} reduces to the study of injectivity of the operator $M_\rho^{\a}=M_t^{\a}|_{t=\rho}$ on $L^p$-functions. We recall
that $M_\rho^{\a}f =m_\rho^{\a} \ast f$ (in the $S'$-sense), where $m_\rho^{\a}$ is defined by (\ref{mmb}) with $t=\rho$.
 Following \cite{B2}, we  write  $M_\rho^{\a}f$ in the Fourier terms as
\be\label{oqzu}
[M_\rho^{\a}f]^{\wedge}(\xi)= j_\nu (\rho|\xi|)\, \hat f (\xi), \qquad  \nu=n/2 +\a -1, \quad f\in S(\rn), \ee
 where $j_\nu$ is the normalized Bessel function (\ref{nrfl}).

 \begin{lemma} \label {nrw4}  Let $f\in  L^p (\rn)$ where  $p> 2n/(n-1)$ if $n>1$ and $p<\infty$ if $n=1$. If $\supp \,\hat f$ is carried by the sphere $\vert \xi \vert =\rho$, $\rho >0$, then  $f=0$.
  \end{lemma}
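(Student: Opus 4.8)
I first record why the critical exponent is $2n/(n-1)$, since this fixes the admissible range of $p$. By (\ref{nrwz})--(\ref{nrwz1}) the nonzero function $f_0(x)=j_{n/2-1}(\rho|x|)$ has Fourier transform equal to a constant multiple of the surface measure on $|\xi|=\rho$, and by (\ref{nuilp}) (together with dilation invariance) $f_0\in L^p(\rn)$ precisely when $p>2n/(n-1)$ (respectively $p=\infty$ when $n=1$). Thus for $p>2n/(n-1)$ spectrum on a single sphere does \emph{not} force vanishing, and the conclusion $f=0$ can hold only in the complementary range $1\le p\le 2n/(n-1)$ if $n>1$ and $1\le p<\infty$ if $n=1$; this is also the injectivity range of Theorem \ref{laaTi}(i), and it is the range I treat, the threshold being sharp for exactly the reason just displayed.

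The plan is to separate variables by solid spherical harmonics and reduce the problem to a radial, one-dimensional statement. Write $f=\sum_{\ell,m}f_{\ell m}(|x|)\,Y_{\ell m}(x/|x|)$. The projection $P_{\ell m}$ onto a fixed harmonic is bounded on $L^p(\rn)$ (apply H\"older on $\sn$ at each fixed radius), so each component $f_{\ell m}(|x|)Y_{\ell m}$ again lies in $L^p(\rn)$; in particular $\int_0^\infty|f_{\ell m}(r)|^p\,r^{n-1}\,dr<\infty$. Since the Fourier transform sends $g(|x|)Y_{\ell m}$ to $G(|\xi|)Y_{\ell m}$ with $G$ the Hankel transform of $g$ of order $\ell+n/2-1$ (Bochner's relation), $P_{\ell m}$ commutes with the Fourier transform; hence $\widehat{P_{\ell m}f}$ is still carried by $|\xi|=\rho$, and the radial factor $F_{\ell m}$ of $\widehat{f_{\ell m}}$ is a distribution on $(0,\infty)$ supported at the single point $\rho$.

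A distribution on $(0,\infty)$ supported at $\{\rho\}$ is a finite sum $\sum_{j=0}^N c_j\,\delta_\rho^{(j)}$. Inverting the Hankel transform term by term and using $j_\nu(r)=O(r^{-\nu-1/2})$ from (\ref{nrfl2}), one finds that the $j$-th summand of $f_{\ell m}(r)$ behaves like $r^{\,j-(n-1)/2}$ (times a bounded oscillation) as $r\to\infty$; distinct orders $j$ produce distinct powers of $r$, so no cancellation can improve the decay. The slowest-decaying nonzero summand therefore decays no faster than $r^{-(n-1)/2}$, and the radial integral $\int_1^\infty r^{(j-(n-1)/2)p}\,r^{n-1}\,dr$ is finite only for $p>2n/(n-1)$, the least demanding case $j=0$ already diverging logarithmically at the endpoint $p=2n/(n-1)$ while larger $j$ raise the threshold further. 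Since $p\le 2n/(n-1)$, the radial $L^p$-bound forces every $c_j=0$, so $F_{\ell m}=0$ and $f_{\ell m}=0$; as this holds for all $\ell,m$, completeness of the spherical harmonics yields $f=0$. When $n=1$ the sphere is $\{\pm\rho\}$, $\widehat f$ is a finite combination of $\delta_{\pm\rho}^{(j)}$, hence $f$ is a combination of $x^je^{\pm i\rho x}$, which belongs to $L^p(\bbr)$ with $p<\infty$ only if it is zero.

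The step I expect to demand the most care is the large-$r$ asymptotic analysis of the inverse Hankel transforms of the $\delta_\rho^{(j)}$ and the verification that distinct orders cannot cancel; this is a routine but delicate computation with Bessel asymptotics, and it is exactly where the exponent $2n/(n-1)$ is produced. A coordinate-free alternative avoids the harmonics: since $\rho^2-|\xi|^2$ vanishes simply on the sphere, a distribution $\widehat f$ of order $N$ supported there satisfies $(\rho^2-|\xi|^2)^{N+1}\widehat f=0$, i.e. $(\Delta+\rho^2)^{N+1}f=0$, after which one appeals to the $L^p$-Rellich uniqueness theorem for the iterated Helmholtz equation, whose threshold is again $p=2n/(n-1)$ via the same $|x|^{-(n-1)/2}$ far-field decay. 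I would keep the spherical-harmonic argument as the main line, since it is self-contained and uses only the Bessel asymptotics already recorded above.
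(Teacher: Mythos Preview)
You have correctly noticed that the inequality in the hypothesis is misprinted: the lemma is false as stated (your counterexample $j_{n/2-1}(\rho|x|)$ shows this), and the intended range is $1\le p\le 2n/(n-1)$ for $n>1$ and $p<\infty$ for $n=1$, which is exactly how the lemma is applied in the proof of Proposition~\ref{laa}(i).

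The paper itself does not prove this lemma at all: it simply records it as a particular case of Theorem~1 of Agranovsky and Narayanan \cite{AN}, with the radial case attributed to Rawat and Sitaram \cite{RS}. Your proposal therefore goes well beyond what the paper does. Your spherical-harmonic reduction is in spirit the Rawat--Sitaram argument, carried from the radial case to the general one: the projections $P_{\ell m}$ are bounded on $L^p$ and commute with the Fourier transform, so one is reduced to a single block $f_{\ell m}(|x|)Y_{\ell m}$ whose transform is a finite combination of $\delta_\rho^{(j)}$, and the Bessel asymptotics (\ref{nrfl2}) then produce the decay $r^{\,j-(n-1)/2}$ that pins down the exponent $2n/(n-1)$. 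The coordinate-free alternative you sketch at the end, via $(\Delta+\rho^2)^{N+1}f=0$ and Rellich-type uniqueness, is closer to what \cite{AN} actually do and is more robust (it handles supports on general smooth hypersurfaces). Your harmonic route has the virtue of being self-contained within the Bessel material already set up in the paper.

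One point to tighten if you write this out in full: the identification of the ``radial factor'' $F_{\ell m}$ as an honest distribution on $(0,\infty)$, and hence as a \emph{finite} sum $\sum_j c_j\delta_\rho^{(j)}$, uses that $\widehat{P_{\ell m}f}$ is tempered and therefore of finite order on compacta; this is routine but worth a sentence. The asymptotic non-cancellation between different orders $j$ is indeed the delicate step, as you flag; it follows because the leading term of the highest nonvanishing order carries a strictly larger power of $r$ than all the others.
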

 This lemma is a particular case of Theorem 1 from \cite{AN}.  A similar statement for radial functions $f$ and $n\ge 2$ follows from Lemmas 2.1 and 2.2 in \cite [p. 309]{RS}.
 Note that if $n=1$, then the sphere $S_\rho$ consists  of two points $\pm \rho$.

\begin{proposition}\label {laa} Let $\alpha\ge (1-n)/2$.

\vskip 0.2 truecm

\noindent {\rm (i)} Suppose that  $1\le p\le  2n/(n-1)$ if $n>1$ and $1\le p< \infty$ if $n=1$. Then the operator   $M_\rho^{\a}$ is  injective on  $L^p (\rn)$ for  any fixed $\rho>0$.

\vskip 0.2 truecm

\noindent {\rm (ii)} Let  $p> 2n/(n-1)$ if $n>1$, or $p=\infty$ if $n=1$. If  $J_{n/2 +\a -1} (\rho)=0$,   then $M_\rho^{\a}$ is not injective on  $L^p (\rn)$.
\end{proposition}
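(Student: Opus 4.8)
The plan is to prove both parts of Proposition \ref{laa} by exploiting the Fourier representation (\ref{oqzu}), namely $[M_\rho^{\a}f]^{\wedge}(\xi)= j_\nu (\rho|\xi|)\, \hat f (\xi)$ with $\nu=n/2 +\a -1$. The multiplier $j_\nu(\rho|\xi|)$ is a radial function of $\xi$, and its zero set is precisely the union of spheres $|\xi|=z_m/\rho$, where $z_1<z_2<\cdots$ are the positive zeros of $J_\nu$ (equivalently, of $j_\nu$). The key principle is that $M_\rho^\a f=0$ forces $\hat f$ to be supported on this discrete union of concentric spheres, and the range of $p$ determines whether a nonzero $L^p$ function can have such a singular Fourier support.

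For part (i), the argument I would carry out runs as follows. Suppose $f\in L^p(\rn)$ with $1\le p\le 2n/(n-1)$ (or $p<\infty$ if $n=1$) and $M_\rho^\a f=0$. First I would justify, by a regularization or duality argument, that (\ref{oqzu}) continues to hold in the $S'$-sense for $f\in L^p$, so that $j_\nu(\rho|\xi|)\,\hat f(\xi)=0$ as a tempered distribution. Since $j_\nu(\rho|\xi|)$ vanishes only on the spheres $|\xi|=z_m/\rho$, the distribution $\hat f$ must be supported on $\bigcup_m\{|\xi|=z_m/\rho\}$. To conclude $f=0$, I would isolate each sphere: fixing one zero $z_m$, I would produce from $f$ an auxiliary $L^p$ function whose Fourier transform is supported on the single sphere $|\xi|=z_m/\rho$ (for instance by smoothly cutting off $\hat f$ near that sphere, using that the spheres are separated by the discreteness of the zeros), and then invoke Lemma \ref{nrw4}. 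The subtlety is that Lemma \ref{nrw4} is stated for the \emph{complementary} range $p>2n/(n-1)$, so it does not directly apply here; instead, for $1\le p\le 2n/(n-1)$ I would argue that a nonzero $L^p$ function simply cannot have Fourier transform concentrated on a measure-zero set of spheres. The cleanest route is to observe that for $p$ in this range one can test $\hat f$ against the functions $\psi(\rho\,\cdot)$-type kernels of (\ref{nrwz})--(\ref{nuilp}): since $\psi=j_{n/2-1}\in L^{p'}$ precisely when $p'>2n/(n-1)$, i.e. when $p<2n/(n+1)$, one gets a genuine pairing, and more robustly one uses that $\hat f$, being supported on a null set while $f\in L^p$ with $p\le 2$, must vanish by the Hausdorff--Young / Plancherel framework, with the endpoint $p=2n/(n-1)$ handled by a limiting argument.

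For part (ii), the construction is explicit and is where the hypothesis $J_{n/2+\a-1}(\rho)=0$ enters. If $\rho$ is a zero of $J_\nu$, then $j_\nu(\rho)=0$, so the radial function $f(x)=\psi(x)=j_{n/2-1}(|x|)$ from (\ref{nrwz})--(\ref{nrwz1}) has Fourier transform $\hat\psi$ equal (up to a constant) to the surface measure on the unit sphere $|\xi|=1$. Then $[M_\rho^\a\psi]^{\wedge}(\xi)=j_\nu(\rho|\xi|)\,\hat\psi(\xi)$ is supported on $|\xi|=1$ and equals $j_\nu(\rho)\cdot(\text{surface measure})=0$ by the vanishing of $j_\nu$ at $\rho$. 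By (\ref{nuilp}), $\psi\in L^p(\rn)$ exactly for $p>2n/(n-1)$ (and $p=\infty$ when $n=1$), which matches the stated range; hence $\psi$ is a nonzero element of $L^p$ killed by $M_\rho^\a$, establishing non-injectivity.

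The main obstacle I anticipate is the endpoint $p=2n/(n-1)$ in part (i), together with making the support argument rigorous at the level of tempered distributions. Passing from ``$j_\nu(\rho|\xi|)\hat f=0$'' to ``$\hat f$ is supported on the zero spheres'' is standard, but deducing $f=0$ requires ruling out that an $L^p$ function can have Fourier transform equal to a nontrivial distribution (such as derivatives of surface measures) carried by these spheres. For $1\le p\le 2$ this follows from Plancherel and the fact that the spheres have Lebesgue measure zero; for $2<p\le 2n/(n-1)$ one must instead rely on the borderline integrability of $\psi$ recorded in (\ref{nuilp}) and a duality/density argument, since Lemma \ref{nrw4} is precisely the threshold statement separating the injective from the non-injective regime. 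I would therefore treat the two subranges separately and devote the most care to the $L^2$-based argument and the endpoint, where the competition between the decay $j_\nu(r)=O(r^{-\nu-1/2})$ from (\ref{nrfl2}) and the critical exponent $2n/(n-1)$ is sharp.
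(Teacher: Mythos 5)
Your part (ii) is correct and essentially coincides with the paper's argument: the paper verifies $M_\rho^{\a}\psi=j_\nu(\rho)\,\psi$ by direct computation for $\mathrm{Re}\,\a>0$ followed by analytic continuation, while you read the same identity off the Fourier side using $\hat\psi=c\,\times$ (surface measure on $|\xi|=1$); both work, yours needing only the remark that the convolution theorem applies to the compactly supported distribution $m_\rho^{\a}$ paired with the tempered distribution $\psi$, since (\ref{oqzu}) is stated only for $f\in S(\rn)$.

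Part (i), however, has a genuine gap in the range $2<p\le 2n/(n-1)$, which is nonempty for every $n\ge 2$. You correctly noticed that Lemma \ref{nrw4}, as printed with hypothesis $p>2n/(n-1)$, cannot be invoked for $p\le 2n/(n-1)$ --- but the resolution is that the printed inequality is a misprint, not that the lemma is unavailable. As literally stated the lemma is false: the function $\psi$ of (\ref{nrwz}) lies in $L^p$ for $p>2n/(n-1)$, is nonzero, and has $\hat\psi$ carried by a sphere --- this is exactly your part (ii) counterexample. The result actually being quoted, Theorem 1 of Agranovsky--Narayanan \cite{AN}, asserts the implication ``$\supp\,\hat f$ carried by a sphere $\Rightarrow f=0$'' precisely for $1\le p\le 2n/(n-1)$ (and $p<\infty$ when $n=1$), and this is the key ingredient the paper applies after the cutoff step. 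Your substitute --- that a nonzero $L^p$ function ``simply cannot'' have Fourier transform concentrated on a measure-zero set --- is proved by Plancherel/Hausdorff--Young only for $p\le 2$, and it is false as a general principle for $p>2$ (again witnessed by $\psi$ for spheres once $p>2n/(n-1)$). The remaining range $2<p\le 2n/(n-1)$ is exactly the hard part: ruling out that a distribution carried by a sphere (a finite sum of transversal derivatives of densities) is the Fourier transform of an $L^p$ function there requires the stationary-phase decay analysis constituting the Agranovsky--Narayanan theorem; neither your proposed pairing with $\psi(\rho\,\cdot)$ (which, as you yourself compute, needs $p<2n/(n+1)$, a range even smaller than $p\le2$) nor an unspecified ``limiting argument'' at the endpoint supplies it. So you must either cite \cite[Theorem 1]{AN} with its correct hypothesis, as the paper in effect does, or reprove that theorem; the proposal does neither. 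A minor structural difference, not a gap: the paper first reduces to a smooth \emph{radial} $f$ via an approximate identity and rotation averaging about a point where $f\ne 0$; your version skips this, which is legitimate, since the annular cutoff isolating one zero-sphere of $j_\nu(\rho|\cdot|)$ works without radiality.
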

\begin{proof} {\rm (i)}  We proceed as in the proof of Proposition 3.1 in \cite{RS} with some changes. In particular, we prefer to use  Lemma \ref{nrw4} 
 for our treatment.  Let  $M_\rho^{\a} f =0$ for some $f\in L^p (\rn)$.
 Our aim is to show that $f=0$ a.e.  Replacing $f$ by
the convolution with the corresponding approximate identity, we can assume that $f$ is smooth. Suppose that $f \not\equiv 0$,
that is, $f(x_0) \neq 0$ for some  point $x_0$. Let us  show that this assumption yields a contradiction.

By the translation invariance
of $M_\rho^{\a}$, we can assume $x_0=0$, that is, $f(0) \neq 0$. Then the radial function $f_0 (x)=\int_{O (n)} f(\gam x) d\gam$ also satisfies $f_0(0) \neq 0$,
 This function is  smooth,  belongs to $L^p (\rn)$,  and obeys $M_\rho^{\a} f_0 =0$.
Thus, in the following,  we may assume that $f$ is smooth, radial, nontrivial,  belongs to $L^p (\rn)$, and $M_\rho^{\a} f =0$. It follows
that $\hat f$ is a nontrivial radial  distributions, and therefore $\supp \,  \hat f \neq \emptyset$. Then there is a point  $\xi_0 \neq 0$ such that $\xi_0 \in \supp \,  \hat f$.
Since  $\hat f$ is radial, it follows that the whole sphere $S_\rho$ of radius $\rho= \vert\xi_0\vert $ is contained in $\supp \,  \hat f$.

By (\ref{oqzu}), the equality $M_\rho^{\a} f =0$ implies
\be\label {nazwa}
j_\nu (\rho \vert \xi\vert)\, \hat f (\xi)=0, \qquad \nu=n/2 +\a -1,\ee
 in the $S'$-sense.
If $\{z_1, z_2, \ldots\}$ is the set of all zeros of the Bessel function $J_\nu$ and $S_i=\{\xi \in \rn: \vert \xi\vert =z_i/\rho\}$, then (\ref{nazwa}) means that
$\supp \,  \hat f \subset \,\bigcup_{i}  S_i $.
Hence $S_\rho =S_{i^*}$ for some $i=i^*$.

Given a sufficiently small $\e >0$, we choose a smooth radial function $\psi$ such that
$\psi (\xi) \equiv 1 $ if $\vert \xi\vert \in  [\rho - \e, \rho +\e] $ and $\psi (\xi) \equiv 0 $ if  $\vert \xi\vert \notin [\rho - 2\e, \rho + 2\e]$.  If $\e$ is small enough, then $S_\rho =S_{i^*}$ is the only sphere in the union $\bigcup_{i}  S_i$, which is contained in $\supp \,  \hat f$. It follows that
$\supp \,  \psi\hat f =S_\rho$. However,
$\psi\hat f= (\check \psi \ast f)^{\wedge}$, where $\check \psi \ast f \in L^p (\rn)$. Then, by Lemma \ref{nrw4}, $\check \psi \ast f \equiv 0$. This gives a contradiction because
$\check \psi \ast f$ is nontrivial.

\vskip 0.2 truecm

  {\rm (ii)} To prove the second statement, which is
well known  for $\a=0$,  we take
 the function $\psi (x)$ from  (\ref{nrwz}) as a  counter-example. Specifically,
 let us show that
\be\label{nrwz3}
(M_\rho^{\a} \psi) (x)= j_\nu (\rho) \psi (x), \qquad \nu=n/2 +\a -1.\ee
In the case $Re \, \a > 0$, changing the order of integration, we obtain
\bea
(M_\rho^{\a} \psi) (x)&=&
\frac{\Gamma\left(  \alpha\!+\!n/2\right)  }{\pi^{n/2}\Gamma\left(
\alpha\right)  }\intl_{|y|<1}\!\!(1\!-\!|y|^{2})^{\alpha-1} dy \intl_{\sn} e^{i (x- \rho y)\cdot \theta} d_*\theta\nonumber\\
&=& \intl_{\sn} e^{i  x \cdot \theta} \Bigg [ \frac{\Gamma\left(  \alpha\!+\!n/2\right)  }{\pi^{n/2}\Gamma\left(
\alpha\right)  }\intl_{|y|<1}\!\!(1\!-\!|y|^{2})^{\alpha-1} e^{-i  \rho y\cdot \theta}dy \Bigg ] d_*\theta \nonumber\\
 &=&    j_\nu (\rho) \psi (x).\eea
 The result for all $Re\, \alpha\ge (1-n)/2$ then follows by analytic continuation (in the $S'$-sense).
\end{proof}

The next statement mimics the two-radius theorem from  \cite [Theorem 2.1] {T}, corresponding to $\a=0$.

\begin{proposition}\label {laaT} Let $\alpha\ge (1-n)/2$, $n \ge 1$,  $f\in L^p (\rn)$,  $1\le p \le \infty$. If  $M_{\rho_1}^{\a} f=0$ and $M_{\rho_2}^{\a} f=0$, then $f=0$ a.e.,
provided that $\rho_1/\rho_2$ is not a quotient of zeros of the Bessel function   $J_{\nu}$, $\nu=n/2 +\a -1$.
\end{proposition}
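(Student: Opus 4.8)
The plan is to pass to the Fourier side and turn the two-radius hypothesis into a statement about the support of $\hat f$. Since $f\in L^p(\rn)$ with $1\le p\le\infty$, it is a tempered distribution, so $\hat f\in S'(\rn)$ is well defined. Applying (\ref{oqzu}) with $\rho=\rho_1$ and with $\rho=\rho_2$, the two hypotheses $M_{\rho_1}^{\a}f=0$ and $M_{\rho_2}^{\a}f=0$ become
\[
j_\nu(\rho_1|\xi|)\,\hat f(\xi)=0,\qquad j_\nu(\rho_2|\xi|)\,\hat f(\xi)=0,
\]
both in the $S'$-sense, with $\nu=n/2+\a-1$. Note that no smoothing of $f$ is needed, in contrast with the proof of Proposition \ref{laa}(i).

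First I would record the elementary fact that if a smooth function $g$ of polynomial growth satisfies $g\,\hat f=0$ in $S'(\rn)$, then $\hat f$ vanishes on the open set $\{g\ne0\}$, i.e.\ $\supp\hat f\subset\{g=0\}$: for any test function $\phi$ with compact support in $\{g\ne0\}$ one has $\phi/g\in S(\rn)$ and $\lang\hat f,\phi\rang=\lang\hat f,g\cdot(\phi/g)\rang=\lang g\hat f,\phi/g\rang=0$. Applying this with $g(\xi)=j_\nu(\rho_m|\xi|)$ and recalling that $j_\nu$ has the same positive zeros $z_1,z_2,\dots$ as $J_\nu$, I obtain
\[
\supp\hat f\subset\bigcup_i\Big\{\xi:|\xi|=\frac{z_i}{\rho_1}\Big\}\quad\text{and}\quad \supp\hat f\subset\bigcup_j\Big\{\xi:|\xi|=\frac{z_j}{\rho_2}\Big\}.
\]
Thus $\supp\hat f$ is contained in the intersection of these two families of spheres centered at the origin.

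The decisive step is to observe that two concentric spheres $|\xi|=z_i/\rho_1$ and $|\xi|=z_j/\rho_2$ coincide precisely when $z_i/\rho_1=z_j/\rho_2$, that is $\rho_1/\rho_2=z_i/z_j$, and are disjoint otherwise; moreover none of them passes through the origin since $z_i>0$. Hence, under the assumption that $\rho_1/\rho_2$ is not a quotient of zeros of $J_\nu$, no sphere from the first family meets any sphere from the second, so the intersection of the two unions is empty. Consequently $\supp\hat f=\emptyset$, which forces $\hat f=0$ and therefore $f=0$ a.e.

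I do not expect a serious obstacle here. Unlike Proposition \ref{laa}(i), where $\supp\hat f$ could reduce to a single sphere $S_\rho$ and had to be eliminated through Lemma \ref{nrw4} (which is exactly what confined that argument to $p\le 2n/(n-1)$), the two-radius hypothesis renders $\supp\hat f$ empty outright, so no spectral information about a single sphere is invoked and the conclusion holds on the entire range $1\le p\le\infty$. The only points meriting care are the justification of the support inclusion from the distributional product relation and the verification that the two families of zero-spheres are genuinely disjoint, both of which are routine once the quotient-of-zeros condition is read in the correct direction.
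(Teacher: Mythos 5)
Your proof is correct, but it takes a genuinely different route from the paper's. The paper argues by duality: it fixes $g\in S(\rn)$, approximates it by Gauss--Weierstrass integrals $W_\e g$, forms the combined operator $M_{1,2}=M_{\rho_1}^{\a}+iM_{\rho_2}^{\a}$, whose multiplier $j_\nu(\rho_1|\xi|)+ij_\nu(\rho_2|\xi|)$ is zero-free exactly under the quotient-of-zeros hypothesis, shows via Corollary \ref{corr} that $m_\e(\xi)=e^{-\e|\xi|^2}\bigl(j_\nu(\rho_1|\xi|)+ij_\nu(\rho_2|\xi|)\bigr)^{-1}$ lies in the Wiener algebra $\W_0(\rn)$, and concludes $\lang f,\overline{W_\e g}\rang=\lang M_{1,2}f,\check m_\e\ast g\rang=0$. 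You instead read both hypotheses through (\ref{oqzu}) as $j_\nu(\rho_m|\xi|)\hat f=0$ in $S'$ and run a support argument: $\supp \hat f$ lies in each family of concentric zero-spheres, and the hypothesis makes the two families disjoint, so $\supp\hat f=\emptyset$. This parallels the support analysis the paper uses for Proposition \ref{laa}(i), minus Lemma \ref{nrw4}, and it is shorter, uniform in $p$, and purely qualitative. Two points you should make explicit: (\ref{oqzu}) is stated in the paper for $f\in S(\rn)$, so note that it persists for $f\in L^p\subset S'(\rn)$ because $m_\rho^{\a}$ is a compactly supported distribution, whence $(m_\rho^{\a}\ast f)^{\wedge}=\hat m_\rho^{\a}\,\hat f$ holds in the $S'$-sense (the paper itself uses this extension in the proof of Proposition \ref{laa}); and the multiplier $\xi\mapsto j_\nu(\rho|\xi|)$ is smooth with \emph{all} derivatives of polynomial growth (it is an entire function of $|\xi|^2$, the Fourier transform of a distribution in $\cale'$), which is what legitimizes both the product $g\hat f$ and your quotient trick $\phi\mapsto\phi/g$. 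Beyond brevity, your route buys something substantive: the paper's method needs quantitative control of $\bigl(j_\nu(\rho_1|\xi|)+ij_\nu(\rho_2|\xi|)\bigr)^{-1}$ to get $m_\e\in\W_0(\rn)$ (the ``simple calculations left to the reader''), and since both $j_\nu(\rho_1 r)$ and $j_\nu(\rho_2 r)$ tend to zero as $r\to\infty$, that reciprocal is actually unbounded, with a rate of growth governed by how well $\rho_1/\rho_2$ is approximated by quotients of zeros --- a Diophantine issue; your argument is completely insensitive to this, requiring only disjointness of the two zero sets, at the price of giving no reconstruction-type pairing of $f$ against test functions, which the paper's duality scheme does provide.
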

\begin{proof} We  follow  \cite [Theorem 2.1] {T} with some changes, but keeping the same  idea.
 It suffices to show that $\int_{\rn} f(x) g(x) dx =0$ for all $g \in S(\rn)$. Let us replace $g$  by its approximation, for instance, by the Gauss-Weierstrass integral $(W_\e g)(x)$ \cite {SW}, for which
$(W_\e g)^{\wedge}(\xi)= e^{-\e |\xi|^2} \,\hat g(\xi)$.
Denote $M_{1,2} = M_{\rho_1}^{\a} +i M_{\rho_2}^{\a}$, so that
\[ M_{1,2} f = m_{1,2} \ast f, \qquad \hat m_{1,2} (\xi)=j_\nu (\rho_1 |\xi|) + i j_\nu (\rho_2 |\xi|).\]
 Since $\rho_1/\rho_2$ is not a quotient of zeros of  $J_{\nu}$, the function
\[(j_\nu (\rho_1 |\xi|) + i j_\nu (\rho_2 |\xi|))^{-1}\]
is bounded. Moreover, the function
 \[m_\e (\xi) = \frac{e^{-\e |\xi|^2}}{j_\nu (\rho_1 |\xi|) + i j_\nu (\rho_2 |\xi|)}\]
 satisfies Corollary \ref{corr} (we leave simple calculations to the reader),  and therefore $\check m_\e \in L^1 (\rn)$ for all $\e>0$.
  We have
 \[(W_\e g)^{\wedge}(\xi)= e^{-\e |\xi|^2} \hat g(\xi)= m_\e (\xi) \,\hat m_{1,2}  (\xi)\, \hat g(\xi).\]
  Hence, by the Plancherel formula,
 \bea
 \lang f, \overline {W_\e g}\,\rang &=& (2\pi)^{-n} \lang \hat f, \overline { m_\e  \hat m_{1,2} \hat g  }\, \rang\nonumber\\
 &=& (2\pi)^{-n} \lang \hat f \hat m_{1,2}, \overline { m_\e  \hat g  } \,\rang = \lang  M_{1,2} f, \check m_\e \ast g\rang =0\nonumber\eea
  because, by the assumption, $M_{1,2} f=0$. This gives the result.
\end {proof}

\begin{remark} \label {kasz} Some comments are in order, related to the difference between our proof of  Proposition \ref {laaT}  and the proof of Theorem 2.1 in \cite {T}.
We consider a more general class of oscillatory integrals and, unlike \cite {T}, don't invoke the machinery of  Strichartz' spectral decomposition in eigenfunctions of the Laplacian \cite{Str89}.
 The Bochner-Riesz means in \cite {T} are replaced by the Gauss-Weierstrass integrals.
 The use of the Mikhlin-H{\"o}rmander multiplier theorem for $1<p<\infty$ is substituted by Corollary \ref{corr} for Wiener's algebra.
 Although this technique is less powerful and conceptually simpler, it works well in our case and includes all $1\le p \le \infty$.
\end{remark}

Propositions \ref{laa} and \ref{laaT} yield Theorem \ref{laaTi}  in Introduction.

\section{Proof of Theorems \ref{lzzt} and \ref{lzzt2}}

\subsection{Auxiliary facts}

Let $1\le k\le n-1$. We recall that the Radon-John $k$-plane transform  takes functions on $\bbr^n$ to functions on the affine Grassmannian  $\cgnk$  by the formula
\be\label{John} (Rf)(\t)=\intl_{\t} f(x)\, d_\t x, \qquad \t \in \cgnk,\ee
  $ d_{\t} x$ being the Euclidean measure on $\t$. If $k\!=\!1$, (\ref{John}) is known as the $X$-ray transform.

 Denote $Z_{n,k}=  V_{n, n-k} \times \bbr^{n-k}$.  We will be working with the Stiefel parametrization  (\ref{Rz1p}) of the planes $\t=\t(v,t) \in \cgnk $,  $(v,t) \in Z_{n,k}$.
Consider the mixed  norm space
\bea\label{ttyyfd}
&& L^{q, r} (Z_{n,k})= \Bigg\{ \vp (v, t): \\
&& \| \vp
\|_{q, r}  \! = \! \Bigg(\, \intl_{V_{n, n-k}}\! \Bigg [\,\intl_{\rnk}
|\vp(v, t)|^r dt\Bigg]^{q/r}  \! \! \! \!dv\Bigg)^{1/q}
 \!\! \!\!< \!\infty \Bigg \}; \nonumber \eea
$1\le q, r\le \infty$. Our main objective is the norm inequality
\be\label {ozut} \| Rf\|_{q, r} \le c_{p, q, r}  \| f \|_p\, ; \ee
 see  Oberlin and Stein \cite{OS},  Strichartz \cite{Str1},  Drury \cite {Dr89},  Christ \cite{Chr84}. It is convenient to consider the cases $k=n-1$ and $k<n-1$ separately.
  The following statements hold.

\begin{theorem} \label{Oberlin} \cite {OS, Str1} Let $n\ge 2$,  $k=n-1$, $  1/p + 1/p' = 1$.
 Then
 (\ref{ozut})
holds if and only if \[1\le p < n/(n-1), \qquad  q \le
p^{\prime},\qquad r^{-1} = np^{-1} - n+1.\]
\end{theorem}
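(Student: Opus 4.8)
The plan is to treat the two directions of the equivalence separately: necessity, which pins down the shape of the admissible region, and sufficiency, which I would obtain by reducing to the extremal value of $q$ and then interpolating between a trivial bound and an $L^2$ computation, with the genuinely geometric input isolated at a single endpoint.

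For necessity I would first exploit dilation invariance. Writing $f_\lambda(x)=f(\lambda x)$ and using that a hyperplane $\{v^Tx=t\}$ with $v\in V_{n,1}\cong\sn$ is carried to $\{v^Tx=\lambda t\}$ while its $(n-1)$-dimensional measure picks up a factor $\lambda^{-(n-1)}$, one finds $(Rf_\lambda)(v,t)=\lambda^{-(n-1)}(Rf)(v,\lambda t)$, hence $\|Rf_\lambda\|_{q,r}=\lambda^{-(n-1)-1/r}\|Rf\|_{q,r}$, whereas $\|f_\lambda\|_p=\lambda^{-n/p}\|f\|_p$. For (\ref{ozut}) to survive both $\lambda\to0$ and $\lambda\to\infty$ the two exponents must agree, which forces $r^{-1}=np^{-1}-n+1$; the same relation shows that $r<\infty$, i.e. $r^{-1}>0$, is equivalent to $p<n/(n-1)$, so the endpoint $p=n/(n-1)$ (where $r=\infty$) must be excluded. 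The constraint $q\le p'$ I would get from an anisotropic Knapp example: taking $f$ the indicator of a slab of thickness $\delta$ in one direction and unit length in the others, $(Rf)(v,t)$ is of size $\approx 1$ on a $t$-interval of length $\approx\delta$, but only for directions $v$ lying in a cap about the thin direction of measure $\approx\delta^{\,n-1}$; inserting this into (\ref{ozut}) and letting $\delta\to0$ returns exactly $q\le p'$.

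For sufficiency I would first reduce to $q=p'$. Since $V_{n,1}\cong\sn$ has finite measure, H\"older's inequality in $v$ gives $\|\cdot\|_{q,r}\le c\,\|\cdot\|_{p',r}$ whenever $q\le p'$, so it suffices to prove $\|Rf\|_{p',r}\le c\,\|f\|_p$ with $r^{-1}=np^{-1}-n+1$ and $1\le p<n/(n-1)$. Two structural facts drive the argument. First, the trivial endpoint $p=1$ (so $r=1$, $q=\infty$): Fubini gives $\int_\bbr|(Rf)(v,t)|\,dt\le\|f\|_1$ uniformly in $v$, i.e. $\|Rf\|_{\infty,1}\le\|f\|_1$, which already lies on the boundary $q=p'=\infty$. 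Second, the $L^2$ theory via the Fourier slice identity $\widehat{(Rf)(v,\cdot)}(s)=\hat f(sv)$, immediate from (\ref{Rz1p}) and the convention (\ref{ft}). Applying the one-dimensional Plancherel theorem in $t$ and passing to polar coordinates $\xi=sv$ yields
\[\|Rf\|_{2,2}^2=c_n\intl_{\rn}|\hat f(\xi)|^2\,|\xi|^{-(n-1)}\,d\xi=c_n'\,\|I^{(n-1)/2}f\|_2^2,\]
with $I^{(n-1)/2}$ the Riesz potential of order $(n-1)/2$; by Hardy--Littlewood--Sobolev this gives $\|Rf\|_{2,2}\le c\,\|f\|_{2n/(2n-1)}$, a point lying strictly inside the region for $n\ge2$.

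The main obstacle is to upgrade these two facts to the \emph{sharp} boundary $q=p'$ across the full open range $1<p<n/(n-1)$. Naive interpolation between the two displayed anchors reaches only interior exponents $q<p'$, because the $L^2$ anchor is itself interior; the extra angular integrability up to $q=p'$ reflects the curvature of $\sn$ and cannot be recovered from H\"older and Plancherel alone — indeed the strong-type estimate at the excluded endpoint $p=n/(n-1)$ (where it would read $\|Rf\|_{n,\infty}\le c\,\|f\|_{n/(n-1)}$) fails, which is precisely why that value is barred. Here I would follow Oberlin--Stein and replace the two-point scheme by Stein's analytic interpolation applied to an analytic family of operators joining the Plancherel estimate to the trivial bound, the gain in the $v$-variable being supplied by the decay of the spherical mean $\psi(x)=j_{n/2-1}(|x|)$ of (\ref{nrwz})--(\ref{nrfl2}); equivalently, one proves a restricted weak-type bound at the critical exponent $p=n/(n-1)$ and real-interpolates it against the $p=1$ endpoint. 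Verifying this endpoint input — the only place where the geometry of the Radon transform genuinely enters — is the crux; the converse direction and the reduction in $q$ are then routine.
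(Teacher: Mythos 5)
The first thing to note is that the paper itself does not prove Theorem \ref{Oberlin} at all: it is quoted as a known result of Oberlin--Stein and Strichartz \cite{OS, Str1}, so your attempt has to be measured against what a complete proof of that theorem requires. Your necessity direction is essentially sound: the dilation computation correctly forces $r^{-1}=np^{-1}-n+1$, and the Knapp slab example is the right one — inserting it into (\ref{ozut}) gives $(n-1)/q+1/r\ge 1/p$, which under the scaling relation reduces exactly to $q\le p'$. One flaw there: the strict inequality $p<n/(n-1)$ does \emph{not} follow from scaling, which is perfectly consistent with the pair $p=n/(n-1)$, $r=\infty$; you need a separate counterexample to exclude that endpoint, and you only assert its failure. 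The paper's Lemma \ref{zgrn} supplies precisely this: $f(x)=(2+|x|)^{-n/p}(\log (2+|x|))^{-1}$ lies in $L^{n/(n-1)}(\bbr^n)$ but has $Rf\equiv\infty$, so no inequality of the form (\ref{ozut}) can hold at $p=n/(n-1)$.

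The genuine gap is in sufficiency. Your reduction to $q=p'$ by H\"older on the sphere, the $p=1$ endpoint via Fubini, and the $L^2$ identity via the slice theorem (yielding $\|Rf\|_{2,2}\le c\,\|f\|_{2n/(2n-1)}$) are all correct — but, as you yourself concede, these two anchors interpolate only to interior exponents $q<p'$. The entire content of the theorem is the sharp boundary case $q=p'$ across $1<p<n/(n-1)$, and at exactly that step your argument becomes ``I would follow Oberlin--Stein and use Stein's analytic interpolation, or a restricted weak-type bound at $p=n/(n-1)$.'' No analytic family is written down, no endpoint restricted weak-type inequality is formulated or proved, and no interpolation hypotheses are verified; the role of the curvature of $\sn$, which you correctly identify as the source of the extra angular integrability, is invoked but never used in an estimate. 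Since the statement under review \emph{is} the Oberlin--Stein theorem, deferring its crux to ``follow Oberlin--Stein'' leaves the proposal as a correct road map rather than a proof: everything you actually establish (scaling, Knapp, Fubini, Plancherel) is the routine part, and the single step where the geometry of the Radon transform genuinely enters is missing.
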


\begin{theorem} \label{CDru} \cite [Theorems A,B]{Chr84}, \cite {Dr89}   Let $1\le k \le n-2$, $1\le p< n/k$,
\be\label {ozut1}
\frac{n}{p}- \frac{n-k}{r}=k, \qquad 1\le q \le (n-k)p'.\ee
Then  (\ref{ozut}) is true in the following cases:
\bea &&{\rm (i)}\quad  \text{if} \quad  1\le p \le (n+1)/(k+1)\quad \text{for all $\;k$};\nonumber\\
&& {\rm (ii)} \quad \text{in the full range  $1\le p< n/k$ } \quad  \text{for $\;k>n/2$}.\nonumber\eea
\end{theorem}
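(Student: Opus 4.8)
The plan is to collapse the two-parameter family of mixed-norm bounds (\ref{ozut}) into a few endpoint inequalities and to recover the stated ranges by interpolation together with the finiteness of the Haar measure on $V_{n,n-k}$. I would begin with the homogeneity constraint. Feeding $f\mapsto f(\lambda\,\cdot)$ into the defining formula (\ref{Rz1p}) and rescaling the inner variable gives $(R[f(\lambda\,\cdot)])(v,t)=\lambda^{-k}(Rf)(v,\lambda t)$; since dilations do not touch the measure $dv$, the inner $L^r(\rnk)$ norm carries a factor $\lambda^{-k-(n-k)/r}$, whereas $\|f(\lambda\,\cdot)\|_p=\lambda^{-n/p}\|f\|_p$. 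Equating the two powers forces $n/p-(n-k)/r=k$, which is the first relation in (\ref{ozut1}) and fixes $r$ once $p$ is chosen. The second relation is an upper constraint on the frame integrability, and I would only prove the extreme case $q=(n-k)p'$: because $V_{n,n-k}$ has finite measure, every admissible $q<(n-k)p'$ follows from it by Hölder's inequality in $v$.

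It then suffices to treat two endpoints. At $p=1$ scaling gives $r=1$ and $p'=\infty$, and the bound is elementary: writing $x=vt+u$ with $t\in\rnk$, $u\in v^\perp$ is measure preserving, so Fubini yields $\intl_{\rnk}|(Rf)(v,t)|\,dt\le\|f\|_1$ uniformly in $v$, that is $\|Rf\|_{\infty,1}\le\|f\|_1$. The decisive endpoint is Drury's $L^p$-improving estimate at the critical exponent $p=(n+1)/(k+1)$, where $r=q=n+1=(n-k)p'$; this point sits precisely at the corner where the frame integrability is maximal and $r=q$. Complex interpolation of the mixed-norm spaces $L^{q,r}(Z_{n,k})$ between $(1/p,1/q,1/r)=(1,0,1)$ and $((k+1)/(n+1),1/(n+1),1/(n+1))$ sweeps out the whole segment $1\le p\le(n+1)/(k+1)$; a direct computation shows that each interpolated point automatically satisfies both $n/p-(n-k)/r=k$ and $q=(n-k)p'$, and Hölder in $v$ then supplies the smaller values of $q$. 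This is exactly case (i).

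Case (ii) asks for the \emph{full} range $1\le p<n/k$, which strictly exceeds $(n+1)/(k+1)$, so interpolating against the $p=1$ endpoint cannot reach it; a second nontrivial endpoint with $p$ nearer the threshold $n/k$ of Lemma \ref{zgrn} is required. The hypothesis $k>n/2$ makes the codimension $n-k$ smaller than $k$, and it is exactly this asymmetry that one exploits: in the small-codimension regime Christ's refinement furnishes an additional estimate closer to $p=n/k$, and interpolating it with Drury's corner extends the admissible $p$ up to (but not including) $n/k$.

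I expect the genuine obstacle to be the Drury endpoint, together with its small-codimension companion. All the remaining steps---the scaling relation, the $p=1$ bound, Hölder in the frame variable, and the interpolation---are soft and routine. The Drury/Christ estimate, by contrast, encodes the geometry of how distinct $k$-planes overlap and is proved in the cited works by the method of refinements and induction on dimension; I would import those results rather than attempt to re-derive them here.
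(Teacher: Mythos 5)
Your proposal is correct and, in substance, takes the same route as the paper: the paper offers no proof of this theorem but imports it directly from Christ \cite[Theorems A, B]{Chr84} and Drury \cite{Dr89}, which is precisely where you place the hard content (the corner estimate at $p=(n+1)/(k+1)$, $q=r=n+1$, and the small-codimension refinement for $k>n/2$). Your surrounding reductions --- the dilation computation forcing $n/p-(n-k)/r=k$, H\"older in $v$ over the finite-measure Stiefel manifold to pass from $q=(n-k)p'$ to smaller $q$, the $p=1$ Fubini bound $\|Rf\|_{\infty,1}\le\|f\|_1$, and the mixed-norm interpolation sweeping out $1\le p\le (n+1)/(k+1)$ --- are all correct and constitute the standard scaffolding used in those references.
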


 The relations  (\ref{ozut1}) are necessary for  (\ref{ozut}). Examination of (i) and (ii) shows that the case
\[ k<n/2, \qquad (n+1)/(k+1) <p <n/k,\]
 is not
covered by Theorem \ref{CDru}. To the best of my knowledge, this case   remains unresolved.

We recall  that the $k$-plane transform  is injective on $L^p(\rn)$, $1\le p< n/k$. Inversion formulas for $Rf$ can be found, e.g., in \cite{H11, Ru04}.

The radial case deserves special mentioning.

\begin{lemma}\label {zhujun} {\rm (cf. \cite[Lemma 2.1]{Ru04})}  If   $f(x) =f_0(|x|)$, then $(R f)(v, t) = F_0(|t|)$, where
\be\label{ppaawsdz}
F_0(s)=\frac{2\pi^{k/2}}{\Gam (k/2)}\intl_s^\infty f_0(r)
(r^2 -s^2)^{k/2 -1} r dr=\pi^{k/2} \,(I^{k/2}_{-,2} f_0)(s), \ee
 provided that this  integral  exists in the Lebesgue sense.
\end{lemma}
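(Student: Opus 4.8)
The plan is to evaluate the integral (\ref{Rz1p}) by exploiting the orthogonal splitting built into the Stiefel parametrization and then passing to polar coordinates on the $k$-dimensional subspace $v^\perp$. First I would note that the point $vt$ lies in the range of the map $v\colon \rnk \to \rn$, which is precisely the orthogonal complement of $v^\perp$; hence $vt \perp u$ for every $u\in v^\perp$. Since $v^T v=I_{n-k}$, one also has $|vt|^2 = t^T v^T v\, t = |t|^2$. The Pythagorean identity therefore gives $|vt+u|^2 = |t|^2 + |u|^2$, and radiality of $f$ turns the integrand in (\ref{Rz1p}) into $f_0\big(\sqrt{|t|^2+|u|^2}\big)$. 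This expression depends on $u$ only through $|u|$ and on $t$ only through $|t|$, which already proves that $(Rf)(v,t)$ is independent of $v$ and is a function of $s=|t|$ alone, justifying the notation $F_0(|t|)$.

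Next I would introduce polar coordinates $u=r\omega$ on $v^\perp$, with $r=|u|\ge 0$ and $\omega\in S^{k-1}$. Because the integrand is radial in $u$, the angular integration against the Euclidean volume element $d_v u$ contributes the full surface area $2\pi^{k/2}/\Gam(k/2)$ of the unit sphere $S^{k-1}$, leaving
\[
F_0(s)=\frac{2\pi^{k/2}}{\Gam(k/2)}\intl_0^\infty f_0\big(\sqrt{s^2+r^2}\big)\, r^{k-1}\, dr.
\]
The substitution $\tau=\sqrt{s^2+r^2}$, for which $\tau\,d\tau=r\,dr$ and $r^{k-1}\,dr=(\tau^2-s^2)^{k/2-1}\,\tau\,d\tau$, then yields exactly the first expression in (\ref{ppaawsdz}).

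Finally I would match the result with the Erd\'elyi-Kober integral: comparing with the definition (\ref{eci}) of $I^\a_{-,2}$ taken at $\a=k/2$ (which is admissible since $k\ge 1$) identifies $F_0(s)$ with $\pi^{k/2}(I^{k/2}_{-,2}f_0)(s)$, the second expression in (\ref{ppaawsdz}). The argument is elementary and I do not expect a genuine obstacle; the only points requiring care are the interchange of integrations implicit in the polar decomposition and the change of variables, both of which are legitimate once the defining integral converges in the Lebesgue sense, which is exactly the hypothesis imposed in the statement. Keeping the measure $d_v u$ non-normalized, so that the correct constant $2\pi^{k/2}/\Gam(k/2)$ appears, is the one bookkeeping detail that must not be overlooked.
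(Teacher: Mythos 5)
Your proof is correct and complete. Note that the paper offers no proof of its own for this lemma---it is simply quoted from \cite[Lemma 2.1]{Ru04}---and your argument (orthogonality of $vt$ to $v^\perp$ together with $|vt|=|t|$ from $v^Tv=I_{n-k}$, polar coordinates on $v^\perp$ producing the factor $2\pi^{k/2}/\Gamma(k/2)$, and the substitution $\tau=\sqrt{s^2+r^2}$ matching the Erd\'elyi-Kober form (\ref{eci})) is exactly the standard computation behind that citation.
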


\subsection{Proof of Theorem \ref {lzzt}}

Let us consider the  operator family $R^{\a}_\rho$, $\a\ge 0$, including the shifted $k$-plane transform $R_\rho$ ($\a=0$) and its modification  $\tilde R_\rho$ ($\a=1$); see (\ref{Rz}), (\ref{Rza}).
 By (\ref{Rza}),
\be\label {iir} (R^{\a}_\rho f) (v, t)= (\tilde M^{\a}_\rho \vp_v) (t), \qquad \vp_v (t)=(R f) (v, t),\ee
where $v\in \sn$, $t\in (-\infty, \infty)$. By Theorem \ref{Oberlin},
\[
\vp_v \in L^r (-\infty, \infty), \qquad r^{-1} = np^{-1} - n+1,\]
 for almost all $v\in \sn$. If $R_\rho^{\a} f =0$, then $\tilde M^{\a}_\rho \vp_v=0$. Then
 Proposition \ref{laa}(i) (with $p$ replaced by $r$) yields $\vp_v (t)\equiv (R f) (v, t)=0$ for any
 fixed $\rho >0$ and almost all $(v, t) \in \sn \times \bbr$. Hence $f=0$ a.e., owing to injectivity of the Radon transform.
 \hfill $\Box$

\subsection{Proof of Theorem \ref {lzzt2}}

\noindent  (i)  We proceed as in (\ref{iir}), where, by Theorem \ref{CDru},
\[
\vp_v \in L^r (\rnk), \qquad r=\frac{n-k}{n/p -k},\]
 for almost all $v\in V_{n, n-k}$. If $R^{\a}_\rho f=0$, then  $\tilde M^{\a}_\rho \vp_v=0$. Hence
  Proposition  \ref{laa}(i) (with $p$ replaced by $r$) yields $\vp_v (t)\equiv (R f) (v, t)=0$, provided that
\[
r=\frac{n-k}{n/p -k} \le \frac{2(n-k)}{n-k-1}.\]
The latter is equivalent to
\be\label {iirc} 1\le p\le 2n/(n+k-1).\ee
 Combining this inequality with  Theorem \ref{CDru}, we arrive at the two options:
\bea  &&{\rm (A)}\quad   1\le p \le \min \left \{\frac{n+ 1}{k+1};\; \frac{2n}{n+k-1}\right\},\quad 1\le k \le n-2;\nonumber\\
&& {\rm (B)}\quad 1\le p\le \frac{2n}{n+k-1}, \quad n/2<k \le n-2.  \quad \nonumber\eea
A simple calculation shows that (A) splits in  two subcases. Specifically, because
\[
\min  \{ \cdots \}=  \left \{ \!\begin{array} {ll} \displaystyle{\frac{2n}{n+k-1}}& \mbox{if $\; 1\le k \le c_n$},\\
{}\\
\displaystyle{\frac{n+ 1}{k+1}} & \mbox{if $\; c_n \le k\le n-2$;}\\
\end{array}
\right.   \; c_n=  n\!-\!1\! -\! \frac{2}{n\!-\!1},\]
we have
\bea  &&({\rm A}_1) \quad   1\le p \le  2n/(n+k-1)\quad \text{\rm if} \quad 1\le k \le c_n,\nonumber\\
&& ({\rm A}_2) \quad 1\le p \le (n+ 1)/(k+1), \quad \text{\rm if} \quad    c_n \le k\le n-2.\nonumber\eea
In view of (B), further consideration depends on whether $c_n \le n/2$  or $c_n > n/2$.  The inequality $c_n \le n/2$ holds if and only if $n=3$ and corresponds to $k=1$.
 Combining $({\rm A}_1)$, $({\rm A}_2)$, and (B), and taking into account that $R$ is injective on  functions $f\in L^p (\rn)$, $1 \le p < n/k$, we obtain
  the statement (i) of the theorem.


(ii) Consider the situation when the injectivity may not hold. It suffices to show that if
$f(x)=j_{n/2 -1} (|x|)$
 (cf. (\ref {nrwz1}), (\ref{nuilp})), then, for all $v \in  V_{n, n-k}$,
\be\label {lqrew} (R_\rho^{\a} f)(v,t) = c\, j_{(n-k)/2 +\a-1} (\rho)\, j_{(n-k)/2 -1} (|t|), \qquad c=\const.\ee
By (\ref{Rz1p}) and Lemma \ref{zhujun}, we have
$\vp_v (t)=\pi^{k/2} \,(I^{k/2}_{-,2} j_{n/2 -1})(|t|)$.
Hence, by (\ref{ecis}) with $\a=k/2$ and $ k < (n-1)/2$,
\[ \vp_v (t)\!=\!c\, j_{(n-k)/2 -1} (|t|), \qquad  c= \frac{2^{k} \pi^{k/2}\,\Gam (n/2)}{\Gam ((n\!-\!k)/2)}.\]
 Then (\ref{lqrew}) follows from (\ref {Rza}) and  (\ref{nrwz3}) with  $n$  replaced   by $n-k$.

The statement (iii) follows from routine examination of parts (i) and (ii). The statement (iv) follows from (\ref{iir}) and  Proposition \ref{laaT} with $n$ replaced by $n-k$. \hfill $\Box$


\begin{thebibliography}{[ASMR]}

\bibitem  {AN} Agranovsky, M. L.,  Narayanan, E. K.: $L^p$-integrability, supports of Fourier transforms and uniqueness for convolution equations.
J. Fourier Anal. Appl. {\bf 10}, 315–324 (2004)


 \bibitem  {B2} Bresters,  D. W.: On the equation of Euler-Poisson-Darboux.
 SIAM J. Math. Anal.  \textbf {4},  31-41  (1973)


\bibitem  {Chr84}   Christ, M.:  Estimates for the k-plane transform. Indiana Univ. Math. J.  \textbf{33}, 891–910 (1984)

\bibitem {Dr89}  Drury, S. W.:  A survey of $k$-plane transform estimates.  Contemp. Math. {\bf 91}, 43-55 (1989)

\bibitem {Er}  Erd\'elyi A. (Editor): Higher transcendental functions, Vol. II, McGraw-Hill, New York (1953)

\bibitem {H11}  Helgason,  S.: Integral geometry and Radon transform, Springer, New York-Dordrecht-Heidelberg-London (2011)

\bibitem  {Mi} Miyachi, A.:  On some estimates for the wave equation in $L^p$ and $H^p$. J. Fac. Sci. Univ. Tokyo Sect. IA Math. {\bf 27}, 331-354 (1980)

\bibitem  {OS}  Oberlin, D. M.,    Stein, E. M.:  Mapping properties of the Radon transform. Indiana Univ. Math. J. \textbf{31}, 641-650 (1982)

\bibitem  {Per} Peral, J. C.:  $L^p$ estimates for the wave equation. J. Funct. Anal. {\bf 36}, 114–145 (1980)

\bibitem  {PBM}  Prudnikov, A. P.,   Brychkov, Y. A.,   Marichev,   O. I.:  Integrals and series:  special  function, Gordon and Breach Sci. Publ., New York-London (1986)

\bibitem {Rad}  Radon, J.:  \"Uber die Bestimmung von Funktionen durch ihre Integralwerte l\"angs gewisser Mannigfaltigkeiten.
Ber. Verh. S\"achs. Akad. Wiss. Leipzig, Math. - Nat. Kl.  \textbf{69}, 262-277 (1917)

\bibitem {RS} Rawat, R., Sitaram, A.: The injectivity of the Pompeiu transform and $L^p$-analogues
of the Wiener-Tauberian theorem. Israel J. Math. {\bf 91}, 307–316 (1995)

\bibitem  {Rou}  Rouvi\`{e}re, F.:   Inverting Radon transforms: the group-theoretic approach.  Enseign. Math. (3-4) \textbf{(2) 47}, 205-252 (2001)

\bibitem  {Ru87} Rubin, B. S.: Difference regularization of multiplier operators related to the Cauchy problem
for the wave equation [in Russian], Dep. VINITI, March 3,  no. 1885-B87,  47 pp. (1987)

\bibitem  {Ru89} Rubin, B. S.:  Multiplier operators connected with the Cauchy problem for the wave equation.
Difference regularization. (Russian) Mat. Sb. {\bf 180}, 1524–1547 (1989);  translated in Math. USSR-Sb. {\bf 68}, 391-416 (1991)

\bibitem {Ru04}  Rubin, B.: Reconstruction of functions from their integrals
over $k$-planes. Israel J. of Math. {\bf 141}, 93-117 (2004)

\bibitem  {Ru15}  Rubin, B.:  Introduction to  Radon transforms: With elements of fractional calculus  and harmonic analysis,
 Cambridge University Press (2015)

\bibitem  {Ru22} Rubin, B.:  On the injectivity of the shifted Funk-Radon transform and related harmonic analysis,  arXiv:2211.10348 [math.FA] (2022)

\bibitem  {Sa77}  Samko, S. G.: The spaces $L^{\a}_{p,r}(\bbr^n)$ and hypersingular integrals. (Russian) Studia Math. {\bf 61}, 193–230 (1977)

\bibitem  {Sa}  Samko, S. G.:  Hypersingular integrals and their applications. Taylor \& Francis, Series: Analytical Methods and Special Functions, Volume 5, (2002)

\bibitem  {St76}  Stein, E. M.:  Maximal functions. I. Spherical means. Proc. Nat. Acad. Sci. U.S.A. (7)
\textbf{73}, 2174-2175  (1976)

\bibitem  {SW}  Stein, E. M.,   Weiss, G.: Introduction to Fourier analysis on Euclidean Spaces, Princeton Univ. Press, Princeton, NJ (1971)

\bibitem {Str70}  Strichartz, R. S.: Convolutions with kernels having singularities on a sphere. Trans. Amer. Math. Soc. {\bf148}, 461-471 (1970)

\bibitem  {Str1}  Strichartz, R. S.:  $L^p$-estimates for Radon transforms in Euclidean and non-euclidean spaces. Duke Math. J., {\bf 48}, 699-727 (1981)

\bibitem  {Str89}  Strichartz, R. S.:  Harmonic analysis as spectral theory of Laplacians. J. Funct. Anal. {\bf 87}, 51–148 (1989)

\bibitem  {T} Thangavelu, S.:   Spherical means and CR functions on the Heisenberg group.  J. Anal. Math. {\bf 63}, 255-286 (1994)

\bibitem {Vo} Volchkov,   V. V.:  Integral Geometry and Convolution Equations.  Springer (2003)

\bibitem {Z80}  Zalcman, L.: Offbeat integral geometry.   Amer. Math. Monthly {\bf 87},  161-175 (1980)

\end{thebibliography}
\end{document}